\documentclass[psamsfonts]{amsart}

%-------Packages---------
\usepackage{amssymb,amsfonts}
\usepackage{soul}
\usepackage[all,arc,color,matrix,arrow]{xy}
\usepackage{enumerate}
\usepackage{mathrsfs}
\usepackage{color}

%--------Theorem Environments--------
%theoremstyle{plain} --- default
\newtheorem{thm}{Theorem}[section]
\newtheorem{cor}[thm]{Corollary}
\newtheorem{prop}[thm]{Proposition}
\newtheorem{lem}[thm]{Lemma}

\theoremstyle{definition}

\theoremstyle{remark}

\newcommand{\ZZ}{\mathbb{Z}} 
\newcommand{\QQ}{\mathbb{Q}} 
\newcommand{\CCa}{\mathcal{C}}
\newcommand{\RR}{\mathbb{R}} 
\newcommand{\CC}{\mathbb{C}} 
\newcommand{\PP}{\mathbb{P}} 
\newcommand{\LL}{\mathscr{L}}

\newcommand{\uconf}{\mathrm{UConf}} 
 
\newcommand{\UU}{\mathbb{U}}

\newcommand{\YY}{\mathscr{Y}}
\newcommand{\A}{\mathscr{A}}
\newcommand{\HH}{\mathscr{H}}
\newcommand{\G}{\mathfrak{G}}
\newcommand{\Ua}{\mathscr{U}}
\newcommand{\EE}{\mathbb{E}}
\newcommand{\FF}{\mathbb{F}}
\newcommand{\supp}{\textrm{supp}}
\bibliographystyle{plain}

%--------Meta Data: Fill in your info------
\title{Stable Cohomology of Discriminant Complements for an algebraic curve}

\author{Ishan Banerjee}

\begin{document}

\maketitle
\begin{abstract}
     Let $\LL$ be a degree $n \ge 1$ line bundle on a smooth projective complex algebraic curve $X$. Let $U(\LL)$ (resp. $\UU(\LL)$) denote the set of algebraic (resp. $C^{\infty}$) sections of $ \LL$. We show that:
     \begin{enumerate}
         \item The inclusion $U(\LL) \to \UU(\LL)$ induces an isomorphism between $H^*(\UU(\LL); \ZZ)$ and $H^*(U(\LL); \ZZ))$for  $*<< n$.
         \item $\UU(\LL)$ is aspherical.
         \item $\pi_1(\UU(\LL))$ is a group closely related to the surface braid group $Br_n(X)$.
     \end{enumerate}
\end{abstract}

\section{Introduction}
In this paper we are concerned with understanding the difference between spaces of algebraic and $C^{\infty}$ sections of complex line bundles on a smooth projective algebraic curve $X$ over $\CC$. We prove that these spaces have isomorphic cohomology in a range of degrees that grows with the degree of the line bundle.

 Let $X$ be a smooth projective algebraic curve over $\CC$ of genus $g$. Let $\LL$ be an algebraic line bundle on $X$ of degree $n$.
 
 Let $C^{\infty}(X,\LL)$ be the vector space of \emph{smooth} sections of $\LL$. Given $s\in C^{\infty}(X,\LL)$, we say $p \in X$ is a \emph{regular zero} of $s$ if $s(p) = 0$ and $s'(p) \neq 0$. If $s \in C^{\infty}(X, \LL)$ is an algebraic section,  then being a regular zero is equivalent to having index 1. Let $$\UU(\LL) = \{ s \in C^{\infty}(X,\LL) |\textrm{all zeroes of } s \textrm{ are isolated and of index }1\}.$$
 
 Let $$U(\LL) := \{ s \in H^0(X,\LL) : \textrm{All zeroes of } s\textrm{ are regular}\}.$$

There is a natural inclusion map $i: U(\LL) \hookrightarrow \UU(\LL)$.  The aim of the present paper is to understand this inclusion at the level of cohomology. Our main theorem is as follows:
\begin{thm}\label{main}
Let $X$ be a smooth projective complex algebraic curve of genus $g$. Let $n \ge 0$. Let $\LL$ be an algebraic line bundle of degree $n$ on $X$. Let $i: U(\LL) \to \UU(\LL)$ be the inclusion map. Then for all $0\le 2k \le n -2g $,
$$i^*:H^k(\UU(\LL); \ZZ) \to H^k(U(\LL);\ZZ)$$ is an isomorphism.

\end{thm}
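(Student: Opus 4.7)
The plan is to compute both $H^*(U(\LL))$ and $H^*(\UU(\LL))$ via Vassiliev-style simplicial resolutions of their respective discriminants, and to compare the resulting spectral sequences through the inclusion $i$. Set $\Sigma^{\mathrm{alg}} := H^0(X,\LL) \setminus U(\LL)$ and $\Sigma^{\mathrm{sm}} := C^\infty(X,\LL) \setminus \UU(\LL)$. The key initial observation is that, for a holomorphic section, the real determinant of $ds(p) : T_pX \to \LL_p$ equals $|ds(p)|^2 \geq 0$, so the derivative $ds(p)$ is never orientation-reversing; hence ``non-regular or index $\neq 1$ zero'' coincides with ``multiple zero'' for holomorphic sections, and $\Sigma^{\mathrm{alg}} = H^0(X,\LL) \cap \Sigma^{\mathrm{sm}}$. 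Both ambient spaces being contractible, the cohomology of the complements is Alexander-dual to the Borel--Moore homology of the discriminants.

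For each $s \in \Sigma$, let $B(s) \subset X$ be its finite bad set. Vassiliev's resolution $\widetilde\Sigma$ glues the open simplex on $B(s)$ to each $s$, giving a proper Borel--Moore equivalence $\widetilde\Sigma \to \Sigma$. Filtering by $|B(s)|$ produces spectral sequences $E^{\mathrm{alg}}_r$ and $E^{\mathrm{sm}}_r$ whose $p$-th stratum fibers over $\uconf_p(X) \times \Delta^{p-1}$ with fiber $F_p := \{ s : s(p_i) = ds(p_i) = 0,\ 1 \leq i \leq p\}$. Writing $D = p_1 + \cdots + p_p$, the algebraic fiber is $F_p^{\mathrm{alg}} = H^0(X, \LL(-2D))$, while $F_p^{\mathrm{sm}}$ is an infinite-dimensional contractible affine subspace of $C^\infty(X, \LL)$. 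The inclusion $i$ induces a map of spectral sequences $E^{\mathrm{alg}}_r \to E^{\mathrm{sm}}_r$, and, column by column, this map is an isomorphism whenever $F_p^{\mathrm{alg}}$ realizes the generic expected codimension $2p$ in $H^0(X, \LL)$ (so that it matches, up to the Alexander-duality shift, the contractible $F_p^{\mathrm{sm}}$).

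The Riemann--Roch theorem pins down the stable range. By Serre duality $h^1(\LL(-2D)) = h^0(K_X - \LL + 2D)$, which vanishes for generic $D$ as soon as $\deg(K_X - \LL + 2D) = 2g - 2 - n + 2p < 0$, i.e.\ $p \leq (n - 2g + 1)/2$. For $p$ in this ``non-special'' range the two $E_1$-columns agree, while for larger $p$ the algebraic column carries additional contributions from the special locus where $h^1(\LL(-2D)) > 0$. A dimension count for the $p$-th stratum pins down the range of total degrees $k$ in $H^k(U(\LL))$ affected by column $p$; combined with Brill--Noether-type codimension estimates for the special locus inside $\uconf_p(X)$, this shows that all discrepancies between $E^{\mathrm{alg}}$ and $E^{\mathrm{sm}}$ contribute only to $H^k$ with $k > n - 2g$, so $i^*$ is an isomorphism in the claimed range.

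The principal technical obstacle is making Vassiliev's framework rigorous in the infinite-dimensional $C^\infty(X, \LL)$-setting, since Borel--Moore homology and Alexander duality do not apply directly. I would handle this by approximating $C^\infty(X, \LL)$ by a cofinal family of finite-dimensional subspaces (for instance the subspaces $H^0(X, \LL \otimes \mathcal{O}(mp_0))$ for a basepoint $p_0 \in X$, as $m \to \infty$, or by truncations of a jet expansion), computing the Vassiliev spectral sequence in each, and passing to the direct limit after verifying column-wise stabilization. Once this framework is set up, the column-by-column comparison reduces to the Brill--Noether/Riemann--Roch bookkeeping above, which is the essential content that produces the precise stable range $k \leq n - 2g$.
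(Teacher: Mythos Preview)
There is a genuine gap in your treatment of $\Sigma^{\mathrm{sm}}$. A smooth section lies in $\Sigma^{\mathrm{sm}}$ if it has a zero that is either non-regular \emph{or} regular of index $-1$, and the second possibility is an \emph{open} condition: a section with $n+1$ regular zeros of index $+1$ and one regular zero of index $-1$ is stable under perturbation. Hence $\Sigma^{\mathrm{sm}}$ has non-empty interior in $C^\infty(X,\LL)$, Alexander duality between $H^*(\UU(\LL))$ and $\bar H_*(\Sigma^{\mathrm{sm}})$ does not hold, and your strata $F_p^{\mathrm{sm}}=\{s:s(p_i)=ds(p_i)=0\}$ only resolve the non-regular part of the discriminant, not the index $-1$ part. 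Your proposed finite-dimensional approximations $H^0(X,\LL\otimes\mathcal{O}(mp_0))$ are spaces of holomorphic sections of \emph{different} line bundles, not subspaces of $C^\infty(X,\LL)$; and any holomorphic approximant is blind to index $-1$ zeros, so it is unclear why the limit of their regular loci should recover $\UU(\LL)$ rather than some larger disconnected space of sections with arbitrary regular zeros.

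The paper avoids this entirely by never running Vassiliev on the smooth side. It builds a universal algebraic space $U_n^{alg}$ fibred over $Pic_n X$ with fibres the various $U(\LL')$, and proves two independent statements: (i) a Vassiliev resolution applied only to \emph{algebraic} discriminants shows that the Abel--Jacobi map $\A:U_n^{alg}\to Pic_n X$ is a homology fibration in degrees $\le n-2g$, so the actual fibre $U(\LL)$ and the homotopy fibre $H\A^{-1}(\LL)$ have the same homology in that range; (ii) a direct homotopy-theoretic argument---showing $\UU(\LL)\to Pic_n X$ is nullhomotopic and analysing the free $\G_p$-action on $\UU(\LL)$---identifies $\UU(\LL)$ with $H\A^{-1}(\LL)$ up to homotopy. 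The comparison of $U(\LL)$ with $\UU(\LL)$ thus goes through the intermediary $H\A^{-1}(\LL)$ rather than through any discriminant in $C^\infty(X,\LL)$.
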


% We can combine Theorem \ref{main} with Theorem 1.2 of \cite{Shi} to obtain:
%\begin{thm}\label{mainhtp}
% Let $X$ be a smooth projective complex algebraic curve of genus $g$. Let $n \ge 0$. Let $\LL$ be a general algebraic line bundle in $Pic_n X$. Then the inclusion $U(\LL) \hookrightarrow \UU(\LL)$ is an n-2g equivalence.

%\end{thm}

We also provide some qualitative understanding of the topology of the space $\UU(\LL)$ and relate it with more classical objects. 
Let $n = \textrm{deg}(\LL)$. Given a space $M$, define $$\textrm{PConf}_n M := \{(x_1, \dots ,x_n) \in M^n | x_i \neq x_j\}.$$ The permutation action of $S_n$  on $M^n$ restricts to an action on $\textrm{PConf}_n M$.
Let $$\uconf_n M = \textrm{Pconf}_n M /S_n$$ be the \emph{unordered configuration space} of $n$ points on $M$.

Define  the $n$ stranded \emph{surface braid group} on a surface $X$ as $$Br_n(X) := \pi_1(\uconf_n(X)).$$ For our purposes we will need to define a group  $\tilde Br_n(X)$ that we call the extended surface braid group. This will be defined later on in Section 3 as $\pi_1(U_n^{alg})$ where $U_n^{alg}$ is a space defined in Section 3 that is a $\CC^*$ bundle over $\uconf_n X$.   1

 Let $\pi: \tilde Br_n(X) \to Br_n(X)$ be the projection.
Now, $\uconf_n(X)  \subseteq Sym^n (X)$ has an Abel Jacobi map  to $Pic^n(X)$. This induces a map $\alpha: Br_n(X) \to \ZZ^{2g}$.
Let $\A = \alpha \circ \pi $.
Let $K_n \subseteq  \tilde Br_n(X)$ be the kernel of this map.
\begin{thm}\label{main2}
Let $n \ge 1$. Let $X$ be a smooth projective curve. Let $\LL$ be a line bundle of degree $n$ on $X$.
The space $\UU(\LL)$ is a $K(\pi,1)$. Furthermore, $$\pi_1(\UU(\LL)) \cong K_n.$$ 

\end{thm}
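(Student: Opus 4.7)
The plan is to exhibit $\UU(\LL)$ as the total space of a Serre fibration over $\uconf_n(X)$ and deduce the theorem from the long exact sequence in homotopy. Consider $p: \UU(\LL) \to \uconf_n(X)$, $s \mapsto \mathrm{Zero}(s)$; this is well-defined since every $s \in \UU(\LL)$ has exactly $n = \deg \LL$ zeros, and it is a Serre fibration by a standard parametrized isotopy argument (realize a given path of configurations as the flow of a compactly supported, time-dependent smooth vector field on $X$ and transport sections accordingly). The fibre $F_D := p^{-1}(D)$ is a torsor over $C^\infty(X, \CC^*)$ under pointwise multiplication; it is non-empty because $\LL \otimes \mathcal{O}(-D)$ is a degree-$0$ smooth complex line bundle on $X$, hence smoothly trivial and admitting nowhere-zero smooth sections. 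Thus $F_D \simeq \mathrm{Map}(X, S^1)$, with $\pi_0 = H^1(X; \ZZ) = \ZZ^{2g}$, $\pi_1 = H^0(X; \ZZ) = \ZZ$, and $\pi_k = 0$ for $k \geq 2$.

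Assume $g \geq 1$, so $\uconf_n(X)$ is aspherical (the case $g = 0$ should be handled separately using the explicit homotopy type of $\uconf_n(\PP^1)$). Then the long exact sequence of $p$ gives $\pi_k(\UU(\LL)) = 0$ for $k \geq 2$, so $\UU(\LL)$ is a $K(\pi, 1)$; in low degrees it reads
$$1 \to \ZZ \to \pi_1(\UU(\LL)) \to Br_n(X) \xrightarrow{\partial} \ZZ^{2g} \to \pi_0(\UU(\LL)) \to 1.$$
I would identify $\partial$ with the Abel-Jacobi map $\alpha$: for a loop $\gamma$, $\partial[\gamma]$ is the smooth monodromy of the family of degree-$0$ line bundles $\LL \otimes \mathcal{O}(-\mathcal{D}_\gamma)$, which equals $\alpha(\gamma)$ under Poincaré duality $H^1(X, \ZZ) \cong H_1(X, \ZZ)$. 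Since $\alpha$ is surjective, $\UU(\LL)$ is connected and the extension reduces to $1 \to \ZZ \to \pi_1(\UU(\LL)) \to \ker \alpha \to 1$.

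The final step is to match this central $\ZZ$-extension with $K_n$, which fits in the parallel extension $1 \to \ZZ \to K_n \to \ker \alpha \to 1$ obtained by restricting $1 \to \ZZ \to \tilde{Br}_n(X) \to Br_n(X) \to 1$ to $\ker \alpha$. The scalar $\CC^*$-action on $\UU(\LL)$ is free; its quotient $\UU(\LL)/\CC^*$ fibres over $\uconf_n(X)$ with discrete fibres $F_D/\CC^* \simeq \pi_0(F_D) = H^1(X, \ZZ)$, so $\UU(\LL)/\CC^* \to \uconf_n(X)$ is the covering corresponding to $\ker \alpha \subseteq Br_n(X)$, and $\UU(\LL) \to \UU(\LL)/\CC^*$ is a principal $\CC^*$-bundle. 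I claim this bundle is canonically isomorphic to the pullback of $E$ along $\UU(\LL)/\CC^* \to \uconf_n(X)$; given this, $\pi_1(\UU(\LL))$ equals $\pi^{-1}(\ker \alpha) \subseteq \tilde{Br}_n(X)$, which is $K_n$ by definition.

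\textbf{Main obstacle.} The identification of the two principal $\CC^*$-bundles in the last step is the main technical difficulty. The fibres of the two bundles live in different ambient spaces (smooth sections of $\LL$ versus the canonical line $\CC \cdot s_D \subseteq H^0(X, \mathcal{O}(D))$), so the isomorphism is not tautological. The cover $\UU(\LL)/\CC^*$ is built precisely so that the family $\LL \otimes \mathcal{O}(-\mathcal{D})$ acquires a continuous fibrewise ``marking'' of $\pi_0 F_D$; the proof would show that this marking provides the required $\CC^*$-equivariant comparison between the local sections $c \cdot s_D$ defining $E$ and the $\CC^*$-orbits in $\UU(\LL)$.
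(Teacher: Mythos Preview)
Your outline is structurally sound and in fact more direct than the paper's route, but the step you treat as routine is precisely where the paper expends the most effort. The paper does \emph{not} compute the connecting homomorphism $\partial$ directly. Instead it introduces the auxiliary space $U_n^{alg}$ (a $\CC^*$-bundle over $\uconf_n X$ with $\pi_1 = \tilde{Br}_n(X)$), proves $\UU(\LL)/\G_p \simeq U_n^{alg}$, and then shows that the composite $\UU(\LL) \to \UU(\LL)/\G_p \simeq U_n^{alg} \xrightarrow{\A} Pic_n X$ is nullhomotopic (Theorem~\ref{nullh}). From this it deduces $\UU(\LL)\simeq H\A^{-1}(\LL)$, whose fundamental group is manifestly $K_n$. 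The nullhomotopy is proved by a point-pushing / winding-number computation (Lemma~\ref{benson}): for $\alpha\in Br_n(X)$ with $\A_*(\alpha)\neq 0$ one finds a curve $\gamma$ in the punctured surface on which the winding number of a putative lift changes nontrivially, forcing $\alpha\notin\mathrm{im}\,\pi_*$.

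Your one-line claim that ``$\partial[\gamma]$ is the smooth monodromy of $\LL\otimes\mathcal O(-\mathcal D_\gamma)$, which equals $\alpha(\gamma)$'' is exactly this content in disguise, and it is not obvious: all smooth degree-$0$ bundles are trivial, so there is no monodromy in the naive sense; what you must actually compute is the class in $\pi_0(\G)=H^1(X;\ZZ)$ of the ratio of two trivializations, and identifying that with the image of $\gamma$ under Abel--Jacobi (after Poincar\'e duality) is a genuine calculation. At minimum you need the inclusion $\ker\partial\subseteq\ker\alpha$, which is Theorem~\ref{nullh}; the reverse inclusion and the surjectivity of $\partial$ you also assert without argument.

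What you flag as the ``main obstacle'' (matching your $\CC^*$-bundle $\UU(\LL)\to\UU(\LL)/\CC^*$ with the pullback of $E$) is exactly the content of the paper's Proposition showing $\UU(\LL)/\G_p \simeq U_n^{alg}$ together with the classifying-map diagram of Proposition~\ref{classmap}. So your approach and the paper's converge on the same two nontrivial ingredients; the difference is that the paper packages both into the single statement $\UU(\LL)\simeq H\A^{-1}(\LL)$, whereas you would assemble them by hand from the long exact sequence. Your route is arguably cleaner once $\partial=\alpha$ is established, but as written that identification is the gap.
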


\vspace{2pt}
\textbf{Motivation}
In \cite{VW} Vakil and Wood consider (among other things) the 'stable class' of the discriminant locus in the Grothendieck group of varieties $K_0(\textrm{Var})$. 
Let us recall the definition of the \emph{Grothendieck group of varieties}. Let us fix a base field $k$. Then we can consider the set $$\textrm{Var}_k = \{X : X \textrm{is a variety over } k\} / isomorphism.$$ We can form a monoid $M$ out of $Var_k$ as follows: let $M$ be generated by elements of $Var_k$, with the relation, if $Y \subseteq X$, $[X] = [X -Y] +[Y] \in M$. The \emph{Grothendieck group} $K_0(\textrm{Var}_k)$ is the group completion of $M$. It has a ring structure coming from the product of varieties. In the literature, the element $\mathbb{A}^1$ (often denoted $\mathbb{L}$) is sometimes inverted. Define $\mathscr{M}_{\mathbb{L}} = K_0(\textrm{Var}_k) [\frac{1}{\mathbb{L}}]$.

Consider a smooth variety $X$ along with an ample line bundle $\LL$ on it.
\begin{thm}[Vakil - Wood \cite{VW}] \label{VW}
Let $j \ge 1$. Let $U(\LL^{\otimes j}) $ be the (open) variety of sections with smooth zero locus. Let $\zeta_X$ be the Kapranov motivic zeta function, and let $d$ be the dimension of $X$. Then,
$$
\lim_{j \to \infty} \dfrac{[U(\LL^j)]}{[H^0(X,\LL^j)]} = \frac{1}{\zeta_X(d+1)}.
$$
Here the limit is with respect to the dimension filtration in $\mathscr{M}_{\mathbb{L}}$.
\end{thm}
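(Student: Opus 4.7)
The plan is to run Poonen's Bertini sieve over finite fields in the motivic setting of $\M_{\mathbb{L}}$, interpreting $\zeta_X(d+1)$ as the evaluation $\zeta_X(\mathbb{L}^{-(d+1)}) = \sum_n [\mathrm{Sym}^n X]\,\mathbb{L}^{-(d+1)n}$ of the Kapranov zeta function. First, localize the singularity condition: a section $s \in H^0(X, \LL^j)$ has smooth zero locus if and only if at every closed point $p \in X$ its $2$-jet $J_p^1(s) \in (\LL^j)_p / \mathfrak{m}_p^2 (\LL^j)_p$ is nonzero. Since $X$ is smooth of dimension $d$ and we are over $\CC$, the target is $(d+1)$-dimensional, so the ``bad'' subspace $W_p \subseteq H^0(X, \LL^j)$ has codimension exactly $d+1$ as soon as $j$ is large enough that the $2$-jet map is surjective (Serre vanishing).

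Next, set up a motivic inclusion-exclusion over finite subsets $S \subseteq X$. For $|S| = n$, the intersection $W_S = \bigcap_{p \in S} W_p$ has codimension $(d+1)n$ in $H^0(X, \LL^j)$, uniformly for $n$ bounded, provided $j \gg n$; assembling these into a family over $\uconf_n X$ yields
$$[\mathcal{V}_n] \;=\; [\uconf_n X] \cdot [H^0(X, \LL^j)] \cdot \mathbb{L}^{-(d+1)n}.$$
A motivic inclusion-exclusion identity then reads $[U(\LL^j)] = \sum_n (-1)^n [\mathcal{V}_n]$, up to an error I must control. Dividing by $[H^0(X, \LL^j)]$ and sending $j \to \infty$ produces the formal series $\sum_n (-1)^n [\uconf_n X]\,\mathbb{L}^{-(d+1)n}$, which under the power structure on $K_0(\mathrm{Var}_\CC)$ (Gusein-Zade--Luengo--Melle-Hern\'andez) is the motivic expansion of the Euler product $\prod_{p \in X} (1 - \mathbb{L}^{-(d+1)})$. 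Identifying this with $\zeta_X(\mathbb{L}^{-(d+1)})^{-1}$ is the motivic incarnation of the Euler product identity $\zeta_X(t)^{-1} = \prod_p (1 - t^{\deg p})$.

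The main obstacle, and the technical core of Vakil-Wood's paper, is making this sieve converge. Because the value of $j$ needed for the vector-bundle computation of $[\mathcal{V}_n]$ grows with $n$, one cannot simply truncate; instead, one must show that the motivic contribution of sections singular at many points is negligible in the dimension filtration on $\M_{\mathbb{L}}$, uniformly in $j$. Following Poonen's small-sieve strategy, I would split the bad complement into a medium-singular range, handled directly by the subbundle picture above, and a very-singular range, stratified by the geometry of the singular support and bounded using the fact that the codimension of $r$-singular sections in $H^0(X,\LL^j)$ grows linearly with $r$ (a consequence of ampleness and Serre vanishing). Matching the medium and very ranges, and verifying that the residual error lies in an arbitrarily deep step of the filtration, is the delicate part — and is exactly why the statement has to be formulated in the completed ring $\M_{\mathbb{L}}$ rather than in $K_0(\mathrm{Var}_\CC)$ itself.
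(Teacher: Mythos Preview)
The paper does not prove this theorem: it is quoted as motivation and attributed to Vakil--Wood \cite{VW}, with no argument given. There is therefore nothing in the paper to compare your proposal against.

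For what it is worth, your sketch is a faithful outline of the Vakil--Wood strategy: localize the singularity condition via jets, run a motivic inclusion--exclusion over configurations of singular points, and control the tail using a Poonen-style split into medium and deep ranges so that the error falls arbitrarily far in the dimension filtration of $\M_{\mathbb{L}}$. That is indeed the content of \cite{VW}, but the present paper simply cites the result rather than reproving it.
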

While Theorem \ref{VW} seems to have nothing to do with the cohomology of the space $U(\LL^j)$, there is a specialisation map $$K_0(\textrm{Var}_k) \to \{\textrm{Weighted Euler characteristics}\}.$$ Thus, Theorem 1.3 implies that there is a stabilisation of Euler characteristics and one can hope for a stabilisation in cohomology as well.

 In \cite{T}, Tommasi proves a cohomological result in the same vein as that of this paper, where she studies discriminant complements on $\PP^n$.
 Her set up is as follows. Let $d,n \ge 1$. Let $X = \PP^n$. Let  $\LL = \mathcal{O}(d)$. Let $$U(\LL) = \{f \in H^0(X, \LL) | f \textrm{ has only regular zeroes}\}.$$ Then the main theorem of \cite{T} stated in our notation is as follows:

 \begin{thm}[Tommasi \cite{T}]
  Let $d,n \ge 1$.
  Let $X = \PP^n$, $\LL = \mathcal{O}(d)$.
  Let $0\le k \le \frac{d+1}{2}$.
  Then $$H^k(U(\LL); \QQ) \cong H^k(GL_{n+1}(\CC); \QQ).$$
 \end{thm}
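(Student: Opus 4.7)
The plan is to use Vassiliev's method of simplicial resolution of discriminants combined with Alexander duality, the standard technique for stable-range computations of this type. Let $V = H^0(\PP^n, \mathcal{O}(d)) \cong \CC^N$ with $N = \binom{n+d}{n}$, and let $\Sigma \subset V$ be the discriminant of sections having a non-regular zero. Then $U(\LL) = V \setminus \Sigma$, and Poincar\'e--Lefschetz duality inside $V$ gives
\[
\tilde H^k(U(\LL); \QQ) \;\cong\; \bar H^{BM}_{2N-k-1}(\Sigma; \QQ),
\]
reducing the problem to computing the Borel--Moore homology of $\Sigma$ in a range.

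To compute that, I would build a Vassiliev-style geometric resolution $\pi : |\XX| \to \Sigma$. The $k$-th stratum parametrises pairs $(f, T)$ where $T \subset \PP^n$ is a configuration of $k+1$ distinct singular points of $f$. Since imposing $f(p)=0$ and $df(p)=0$ contributes $n+1$ linear conditions on $f$, this stratum fibres over $\uconf_{k+1}(\PP^n)$ with affine fibre of complex codimension $(k+1)(n+1)$, provided the conditions at the chosen points are linearly independent. The associated filtration yields a spectral sequence converging to $\bar H_*^{BM}(\Sigma; \QQ)$ whose $E_1$-page is built from twisted Borel--Moore homology of unordered configuration spaces of $\PP^n$. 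The bound $k \le (d+1)/2$ should enter both through linear independence of conditions at $k+1$ general points --- which on $\PP^1$ already cuts off exactly at this value --- and through the dimension count that forces the $k$-th stratum to contribute to $\tilde H^j(U(\LL); \QQ)$ only once $j$ grows linearly with $k$.

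The main obstacle is identifying the stable limit with $H^*(GL_{n+1}(\CC); \QQ) = \Lambda_{\QQ}(x_1, x_3, \ldots, x_{2n+1})$. The linear action of $GL_{n+1}(\CC)$ on $V$ preserves $U(\LL)$, and a generic orbit map $GL_{n+1}(\CC) \to U(\LL)$ is the natural source for the exterior generators. I would try to show that the lowest pieces of the Vassiliev resolution produce exactly these classes and no others; constructing all $n+1$ generators and ruling out phantom classes within the stable range is the delicate step. A useful comparison is with the smooth analogue $\UU(\LL)$, whose rational homotopy type should be more directly accessible by obstruction-theoretic arguments; pairing such a model with a degreewise $U(\LL) \hookrightarrow \UU(\LL)$ comparison (in the spirit of Theorem \ref{main}, but for the higher-dimensional base $\PP^n$) should pin down the stable answer.
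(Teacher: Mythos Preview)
This theorem is not proved in the present paper at all: it is quoted as a result of Tommasi \cite{T} and serves purely as motivation for Theorem~\ref{main}. There is therefore no ``paper's own proof'' to compare your proposal against.

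That said, your sketch is broadly faithful to what Tommasi actually does in \cite{T}: Alexander duality in $V = H^0(\PP^n,\mathcal{O}(d))$, a Vassiliev-type simplicial resolution of the discriminant $\Sigma$, and a spectral sequence whose $E_1$-terms are twisted Borel--Moore homology groups of configuration spaces of $\PP^n$. The present paper even imports that very construction (the space $|\mathscr{X}|$ and its careful topology) in Section~4 when building the resolution over a curve.

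Where your proposal remains genuinely incomplete is exactly the step you flag as ``delicate'': producing the isomorphism with $H^*(GL_{n+1}(\CC);\QQ)$. You say you would ``try to show that the lowest pieces of the Vassiliev resolution produce exactly these classes and no others,'' but give no mechanism. In Tommasi's argument this is not done by inspecting strata one at a time; rather, one uses the orbit map $GL_{n+1}(\CC)\to U(\LL)$ together with a Leray--Hirsch-type argument for the quotient $U(\LL)/GL_{n+1}(\CC)$, and the spectral sequence is used to bound the size of $H^*(U(\LL);\QQ)$ in the stable range so that the orbit map is forced to be an isomorphism there. Your suggestion to bootstrap from a $\PP^n$-analogue of Theorem~\ref{main} is circular in this context: Theorem~\ref{main} is proved here only for curves, and its proof already relies on the machinery from \cite{T}.
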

 
Our motivation for the present paper was to understand if there are  stability phenomena for discriminant complements over general varieties and whether cohomology in the stable range is dependent only on the topology of the variety. Theorem \ref{main} shows that at least in the case of an algebraic curve there is some  kind of stability phenomenon with cohomology in the stable range being purely topological in nature. We are currently working on extending these results to more general varieties.

\vspace{2pt}
\textbf{Relation to other work}: Orsola Tommasi has anounced some results on homological stability for discriminant complements over arbitrary smooth projective varieties. We believe that the results in this paper are  substantially different from hers. We focus on relating discriminant complements to spaces of $C^{\infty}$ sections, which is not the focus of her results.

\vspace{2pt}
\textbf{Acknowledgements}: I would like to thank my advisor Benson Farb for his patience and help with editing countless versions of this paper. I'd like to than Ravi Vakil and Madhav Nori for answering several of my questions, without which I would not have been able to prove these theorems.

\section{Smooth sections}
The space $\UU(\LL)$ is actually easy to understand topologically. 
There is a fibration $\pi: \UU(\LL) \to \uconf_n X$ defined by: $\pi(f) =\{a | f(a) =0\}$. 
We shall need to understand the fibres $\pi^{-1}(\{a_1, \dots a_n\}) \subseteq \UU(\LL)$, but first we shall introduce some basic objects and prove some more technical lemmas.
Let $Y,Z$ be based spaces. Let $C(Y,Z), C_*(Y,Z)$ denote the space of continuous maps from $Y$ to $Z$ and the space of based continuous maps from $Y$ to $Z$.
Let $X$ be a $C^{\infty}$ manifold. Let $\G = C^{\infty}(X, \CC^*)$. For $p \in X$,
let $$\G_p = \{f \in \G | f(p) =1\}.$$
Before we begin with stating and proving the propositions in this section, we note that they are mostly applications of the fact that $\CC^*$ is a $K(\pi,1)$ space and is covered by $\CC$ a contractible space. The space of continuous based maps into a $K(\pi, 1)$ has been classically studied first by Thom and then by many others. Lemma \ref{contG} is a bit more specific to our situation and is not an immediate application of the theory of $K(\pi,1)$ spaces.
\begin{prop}\label{Ghtpy}
with the above notation,
\begin{enumerate}
    \item $\G_p$ is weak  homotopy equivalent to $H^1(X, \ZZ)$.
    \item $\G$ is weak homotopy equivalent to $H^1(X, \ZZ) \times \CC^*$.
\end{enumerate}
\end{prop}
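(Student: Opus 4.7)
The plan is to first reduce part (2) to part (1) using evaluation at the basepoint, and then prove part (1) by counting path components and showing each is contractible.

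For part (2), the evaluation map $\mathrm{ev}_p \colon \G \to \CC^*$, $f \mapsto f(p)$, is a continuous surjective group homomorphism with kernel $\G_p$, and the inclusion $\CC^* \hookrightarrow \G$ as constant functions provides a splitting. Hence $\G \cong \G_p \times \CC^*$ as topological groups, and assuming part (1) this gives $\G \simeq H^1(X, \ZZ) \times \CC^*$.

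For part (1), I would first identify $\pi_0(\G_p)$. Since $\CC^*$ deformation retracts onto $S^1 = K(\ZZ,1)$, the set of based homotopy classes $[(X,p),(\CC^*,1)]$ is isomorphic to $H^1(X;\ZZ) \cong \ZZ^{2g}$, so $\pi_0(\G_p) \cong \ZZ^{2g}$. Because $\G_p$ is a topological group, left multiplication by a representative of each component is a homeomorphism of that component onto the identity component $\G_p^0$, so it suffices to prove $\G_p^0$ is contractible.

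An element $f \in \G_p^0$ is a smooth null-homotopic map $X \to \CC^*$ with $f(p) = 1$. Since $f$ is null-homotopic, it lifts along the universal cover $\exp \colon \CC \to \CC^*$ to a continuous — hence smooth, as $\exp$ is a local diffeomorphism — map $\widetilde{f} \colon X \to \CC$, uniquely determined by the normalization $\widetilde{f}(p) = 0$. The assignment $f \mapsto \widetilde{f}$ identifies $\G_p^0$ with the complex vector space $V = \{g \in C^\infty(X,\CC) : g(p) = 0\}$, which is contractible by scalar multiplication. The only mild technicality — not really an obstacle — is checking that this bijection is a homeomorphism in the natural $C^\infty$ (Fréchet) topology on these mapping spaces, which follows from continuity of $\exp$ and its local inverses, and hence of the induced maps on $C^\infty$ sections.
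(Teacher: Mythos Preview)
Your argument is correct. The paper's own proof is the single sentence ``This is well known,'' so there is nothing substantive to compare against; your write-up is in fact considerably more detailed than what the paper provides, and the approach you take (split off the constants via $\mathrm{ev}_p$, identify $\pi_0$ with $H^1(X;\ZZ)$ using $\CC^*\simeq K(\ZZ,1)$, and contract the identity component by lifting through $\exp$) is the standard one.
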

\begin{proof}
The space $\CC^*$ is a $K(\ZZ,1)$ and by the Proposition labelled Thom [4] in \cite{Hae} $C_*(X, \CC^*)$ is homotopy equivalent to $H^1(X, \ZZ)$ (i.e. each of its components is contractible and the set of components is in natural bijection with $H^1(X,\ZZ)$). By Theorem 1.5 of \cite{KM}, $C_*(X, \CC^*)$ is weak homotopy equivalent to $C^{\infty}(X, \CC^*)$. This establishes (1).
To establish (2) we note that $\G$ is homeomorphic to $\G_p \times \CC^*$, indeed an explicit homeomorphism is given $(f, \alpha) \in \G_p \times \CC^* \mapsto \alpha f \in \G$.

\end{proof}

\begin{prop}\label{contdisk}
Let $D = \{z \in \CC | |z| \le 1\}$. Let $$S = \{f \in C^{\infty} (D - \{0\}, \CC^*)| f(z) = 1 \textrm{ for } z\in S^1 \}.$$ Then $S$ is contractible. 
\end{prop}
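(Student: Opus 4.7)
The plan is to show that $S$ deformation retracts to a point via the exponential map, exploiting the fact that the boundary condition $f|_{S^1} = 1$ forces $f$ to have trivial winding number around the origin.

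First, I would verify that every $f \in S$ admits a smooth logarithm. The punctured disk $D - 0$ is homotopy equivalent to its boundary circle $S^1$, so every loop in $D - 0$ is homotopic to $S^1$; since $f \equiv 1$ on $S^1$, the winding number of $f$ on any such loop is zero. Consequently $df/f$ is a closed $1$-form on $D - 0$ with no periods, so there exists a smooth $g_f: D - 0 \to \CC$ with $e^{g_f} = f$. Such a $g_f$ is unique up to an additive constant in $2\pi i \ZZ$, and since $S^1$ is connected and $g_f|_{S^1} \in 2\pi i \ZZ$ must be constant, I can normalize by requiring $g_f|_{S^1} = 0$.

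Next I would define a contraction $H: S \times [0,1] \to S$ by $H(f, t) = \exp((1-t) g_f)$. This lies in $S$ because $(1-t) g_f$ vanishes on $S^1$, so $H(f,t)|_{S^1} = 1$; it equals $f$ at $t = 0$ and equals the constant function $1$ at $t = 1$, witnessing the contraction.

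The main technical point is showing that the map $f \mapsto g_f$ is continuous in the natural $C^\infty$-topology on function spaces, so that $H$ itself is continuous. This follows from the explicit formula $g_f(z) = \int_{\gamma_z} df/f$ along a fixed smooth path $\gamma_z$ from a basepoint on $S^1$ to $z$: the integrand $df/f$ depends continuously on $f$, integration along a fixed path is continuous, and the higher derivatives of $g_f$ are controlled by those of $f$ via differentiating under the integral sign. Once continuity of the log is in hand, $H$ is a continuous contraction and $S$ is contractible.
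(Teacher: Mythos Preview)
Your proof is correct and is essentially the paper's argument: both observe that the boundary condition $f|_{S^1}=1$ forces $f$ to be nullhomotopic, lift through $\exp:\CC\to\CC^*$, and then contract using the linear structure on $\CC$. The paper phrases the lifting step by first collapsing $\partial D$ to a point and invoking simple connectedness of the quotient, whereas you argue directly via the winding number and are more explicit about continuity of the logarithm, but the underlying idea is the same.
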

\begin{proof}
Let $$S' = \{f \in C^{\infty}(D -\{0\}, \CC)| f(z) = 0 \textrm{ for } z \in S^1\}.$$
Then $S'$ deformation retracts to the constant function $f(z) =0$ by a straight line homotopy.

Now note that there is a covering map $\exp: \CC \to \CC^*$ such that $exp(0) = 1$. We note that if $f \in S$, $f$ is nullhomotopic. Hence by the lifting criterion, there is a unique lift $\tilde f : D - \{0\} \to \CC$ such that $\tilde f(1) = 0$ and $\exp \circ \tilde f = f$.
We know have a homeomorphism between $S$ and $S'$ $\phi: S\to S'$be defined by $\phi (f) = \tilde f$.

This implies that $S$ is contractible.
\end{proof}

\begin{lem}\label{sline}
Let $D$ be the closed unit disk in $\CC$. Let $$F = \{f \in C^{\infty}(D - \{0\}, \CC^*) | \: f |_{\partial D} =1\}.$$
Let $$\tilde F = \{f \in C^{\infty}(D - \{0\}, \CC^*) | \: f \textrm{ is nullhomotopic and } f(1)=1 \}.$$ Then $\tilde F$ deformation retracts to the point $f_0$, where $f_0(x) =1$ for all $x$. Furthermore, the deformation retraction preserves the subset $F$.
\end{lem}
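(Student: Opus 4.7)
The plan is to identify $\tilde F$ with a topological vector space via the complex logarithm, and then use the straight-line contraction to the origin. The key observation is that a nowhere-vanishing, nullhomotopic smooth map on $D-\{0\}$ admits a smooth global logarithm, which the normalization condition $f(1)=1$ renders unique.

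First, I would construct a homeomorphism $\Phi : \tilde F \to V$, where $V\subset C^{\infty}(D-\{0\},\CC)$ denotes the complex vector subspace of smooth functions vanishing at $1$. Given $f \in \tilde F$, the nullhomotopy hypothesis means $f$ lifts through the universal cover $\exp:\CC\to\CC^*$, and the condition $f(1)=1$ singles out the unique lift $\log f$ satisfying $(\log f)(1)=0$. This lift is smooth because $\exp$ is a local diffeomorphism. The inverse map is $g\mapsto \exp(g)$. Both directions are continuous in the $C^{\infty}$ topology, since $\exp$ and each local branch of $\log$ act smoothly on the relevant open sets.

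Next, $V$ is a topological vector space, so the straight-line homotopy $(g,t)\mapsto (1-t)g$ is a deformation retraction of $V$ onto $0\in V$. Transferring through $\Phi$ yields
\[
H:\tilde F\times[0,1]\to \tilde F,\qquad H(f,t)(x):=\exp\bigl((1-t)\log f(x)\bigr),
\]
which satisfies $H(f,0)=f$, $H(f,1)=f_0$ (the constant map $\equiv 1$), and $H(f_0,t)=f_0$ for all $t$.

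To verify that $H$ preserves $F$, suppose $f\in F$, so that $f|_{\partial D}\equiv 1$. Then $\log f$ takes values in $2\pi i\,\ZZ$ on $\partial D$, and by connectedness of $\partial D$ together with $(\log f)(1)=0$, it must vanish identically on $\partial D$. Hence $(1-t)\log f|_{\partial D}\equiv 0$, so $H(f,t)|_{\partial D}\equiv 1$ for all $t$, giving $H(f,t)\in F$. The only step requiring real care is the bicontinuity of $\Phi$ in the $C^{\infty}$ topology; this is routine but is the sole piece of the argument that is not purely algebraic.
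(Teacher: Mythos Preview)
Your argument is correct and is essentially identical to the paper's proof: lift a nullhomotopic $f$ through $\exp:\CC\to\CC^*$ and contract via the straight-line homotopy in $\CC$. You supply more detail than the paper does---in particular the explicit formula $H(f,t)=\exp\bigl((1-t)\log f\bigr)$ and the verification that $\log f|_{\partial D}\equiv 0$ when $f\in F$---but the underlying idea is the same.
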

\begin{proof}
Given $f \in \tilde F$ we can lift it to a  unique map $\tilde f :D- \{0\} \to \CC$, such that $\exp \circ \tilde f = f $ and $\tilde(f)(1) = 0$. The straight line homotopy in $\CC$ defines a homotopy between $\tilde f$ and the constant function. This in turn defines a homotopy $h$ between $f$ and $f_0$. This gives us our deformation retraction. It is easy to see that this preserves $F$.
\end{proof}

\begin{lem}\label{contG}
Recall that $\G= C^{\infty}(X, \CC^*)$. Let $n \ge 1$. Let $\{a_1, \dots, a_n \} \in \uconf_n X$.
Then, 
\begin{enumerate}
    \item There is a free action of $\G$ (as a group under multiplication) on $\pi^{-1}(\{a_1, \dots, a_n \})$.
    \item The quotient $\pi^{-1}(\{a_1, \dots, a_n\})/ \G$ is contractible.
\end{enumerate}
\end{lem}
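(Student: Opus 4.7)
\emph{Proof plan for Lemma~\ref{contG}.} Part~(1) is a direct verification. The group $\G$ acts on $C^{\infty}(X,\LL)$ by pointwise multiplication, and nowhere-zero smooth multipliers preserve the zero set of a section; at each zero $a_i$ the Jacobian $d(fs)(a_i) = f(a_i)\cdot ds(a_i)$ differs from $ds(a_i)$ by the $\CC^*$-scalar $f(a_i)$, which preserves orientation and hence preserves index~$+1$. Freeness follows because $f\cdot s = s$ forces $f\equiv 1$ on the dense open set $X - \{a_1,\dots,a_n\}$, whence $f\equiv 1$ on $X$ by continuity.

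For part~(2), the plan is to reduce the quotient to a product of local quotients, each of which will be contractible by Lemma~\ref{sline}. Choose disjoint coordinate disks $D_i \subset X$ around each $a_i$, trivialize $\LL|_{D_i}$, and fix a reference section $s_0 \in \pi^{-1}(\{a_1,\dots,a_n\})$ so that $s_0|_{D_i}$ is the local coordinate $z$. The first step is to show that every $\G$-orbit has a representative equal to $s_0$ on $X - \bigcup D_i$: for any $s \in \pi^{-1}(\{a_1,\dots,a_n\})$, the ratio $s/s_0 \in C^{\infty}(X - \{a_i\},\CC^*)$ has winding zero around each $\partial D_i$ (because both $s$ and $s_0$ have index~$1$ at $a_i$), so it extends to a smooth $\CC^*$-valued function $g \in \G$ on all of $X$, and then $g^{-1}s$ is the desired normalized representative.

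The subspace of normalized sections is naturally a product $\prod_i \mathcal{N}_i$, where $\mathcal{N}_i$ consists of smooth $\CC$-valued functions on $D_i$ having a regular zero of index~$1$ at $0$ as their only zero and equal to $z$ near $\partial D_i$. The residual $\G$-action is by the subgroup $\prod_i \G_i$, where $\G_i = \{g \in C^{\infty}(D_i,\CC^*) : g\equiv 1 \text{ near } \partial D_i\}$, so that
\[
  \pi^{-1}(\{a_1,\dots,a_n\})/\G \;\cong\; \prod_i (\mathcal{N}_i/\G_i).
\]

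It then suffices to prove each $\mathcal{N}_i/\G_i$ is contractible. The map $\phi\mapsto\phi/z$ sends $\mathcal{N}_i$ into the space $F$ appearing in Lemma~\ref{sline}, and that lemma provides a deformation retraction of $F$ onto the constant function~$1$ via the straight-line-in-the-log homotopy. Multiplying back by $z$ and absorbing any non-smooth remainder into the $\G_i$-action yields a deformation retraction of $\mathcal{N}_i/\G_i$ onto the class of $z$. The main technical obstacle is verifying smoothness at $a_i$ of the representatives at each intermediate time: the naive formula $z\cdot(\phi/z)^{1-t}$ can develop angular singularities that must be cancelled by a carefully chosen $\G_i$-correction, and executing this choice continuously in $\phi$ and $t$ is the most delicate step of the argument.
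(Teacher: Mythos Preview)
Part~(1) matches the paper. For part~(2) your plan is close in spirit to the paper's---both localize to disks $D_i$ around the $a_i$ and exploit the ratio $s/s_0$ together with Lemma~\ref{sline}---but the organization differs and there is a real gap at exactly the step you flag as ``most delicate.'' The proposed repair cannot work: elements of your $\G_i$ are smooth and nowhere zero on $D_i$, so multiplication by $g\in\G_i$ cannot create or remove a singularity at $a_i$. Concretely, writing $\phi=az+b\bar z+O(|z|^2)$ with $0<|b|<|a|$, one has $\phi/z=a+b\,\bar z/z+O(|z|)$, which is bounded but not even continuous at $0$; the same is true of $(\phi/z)^{1-t}$ for $0<t<1$, and no smooth nowhere-vanishing multiplier changes this. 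Hence $z\cdot(\phi/z)^{1-t}$ has no representative in $\mathcal N_i$ under the $\G_i$-action, and your contraction does not stay in $\mathcal N_i/\G_i$. (A smaller imprecision upstream: $s/s_0$ does not itself extend across the $a_i$; what is true is that its restriction to $X\setminus\bigcup D_i$ extends to some $g\in\G$, because the boundary winding numbers vanish.)

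The paper avoids this by never demanding that the intermediate objects be smooth at $a_i$. Rather than identifying $A=\pi^{-1}(\{a_i\})/\G$ with a product of smooth local models, it maps $A$ into the product $\prod\tilde F_i/\tilde G_i$ of \emph{punctured}-disk function spaces via $s\mapsto (s/s_0)|_{D_i-a_i}$, constructs a one-sided inverse $\psi$ from the contractible space $\prod F_i/G_i$, and exhibits a homotopy $\psi\circ j\circ\phi\simeq\mathrm{Id}_A$; since $\mathrm{Id}_A$ then factors through a contractible space, $A$ is contractible. The homotopy used (the map $h$ introduced just before the definition of $\phi$) is arranged to leave the ratio unchanged on a neighborhood $U_i$ of the puncture, so the corresponding section agrees with the original $s$ near each $a_i$ throughout and therefore remains smooth. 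That support condition on the homotopy is exactly what replaces your unavailable $\G_i$-correction; if you want to salvage your organization, the fix is to deform only in an annulus near $\partial D_i$ rather than via the global straight-line-in-log homotopy.
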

\begin{proof}
We define our action as follows: if $s \in \pi^{-1}(\{a_1, \dots, a_n \}) $ and $g \in \G$,  define $g.s(x) = g(x) s(x)$. If $g.s(x) =0$ then $s(x) =0$ as $g(x) \neq 0$ for all $x \in X$. Furthermore, if $g.s = s$ then $g(x) = 1$ for $x \in X - \{a_1, \dots, a_n\}$ and since  $X - \{a_1, \dots a_n\}$ is dense, $g(x) =1$ for all $x \in X$. This concludes the proof of (1).

Let $D_{i}$ be small disks surrounding the points $a_i$. Let $G_i= \{f \in C^{\infty}(D_{i}, \CC^*)| f|_{\partial D_i} =1\}$.
We can identify each $G_i$ with the set of based maps from $S^2$ to $\CC^*$. Since $\pi_1(S^2) = 0$, any based map $f: S^2 \to \CC^*$ lifts to a unique map $\tilde f: S^2 \to \CC$. Hence $G_i$ is homeomorphic to the space of based maps from $S^2$ to $\CC$ and since $\CC$ is contractible, $G_i$ is contractible.

Let $F_i = \{f \in C^{\infty}(D_{i} -a_i, \CC^*) :\:  f|_{\partial D_i} =1\}$. By Proposition \ref{contdisk} $F_i$ is contractible. Let $$\tilde F_i = \{f \in C(D_{i} -a_i, \CC^*) | f|_{\partial D_i} \textrm{ is nullhomotopic}\}.$$ Let $$\tilde G_i = \{f \in C(D_i, \CC^*)\}.$$ Since $D_i$ is contractible, the space $\tilde G_i \simeq \CC^*$ and the quotient $\tilde F_i/\tilde G_i$ is contractible (this is analogous to the proof of Proposition \ref{contdisk}). There is an inclusion map $i: F_i/G_i \hookrightarrow \tilde{F}_i / \tilde{G}_i$ which is a homotopy equivalence as both spaces are contractible. By Lemma \ref{sline},there is a  map $j: \tilde F_i/ \tilde G_i \to F_i /G_i$ satisfying the following properties.

\begin{enumerate}
   
    \item There exists a homotopy $h: \tilde F_i\times [0,1] \to \tilde F_i$ such that $h(f,1) = f$, $h(f,0) =j(f)$ and $h(f,t)|_{U_i} =f|_{U_i}$.
    \item For all $f \in F_i$, $h(f,t) \in  F_i$.
    
\end{enumerate}

Let $ A =\pi^{-1}(\{a_1, \dots a_n\})/ \G$. Fix an $s_0 \in pi^{-1}(\{a_1, \dots a_n\})$. Let $\phi: A \to \prod_{i =1}^n \tilde F_i/ \tilde G_i$ be defined as follows: $$\phi (s) = (s/s_0|_{D_1 -a_1}, \dots s/s_0 |_{D_{n} -a_n}).$$ We claim that $\phi$ is a homotopy equivalence. To prove this we first define  $\psi: \prod_{i =1}^n F_i/G_i \to A$ as follows:.

\begin{equation*}
    \psi(f_1, \dots f_n)(x) = 
    \begin{cases}
    f_i(x)s_0(x) & \textrm{if } x\in D_i\\
    s_0(x) & \textrm{otherwise.}
    \end{cases}
\end{equation*}

It is easy to see that $\psi \circ j\circ \phi \simeq \textrm{Id}$ (the homotopy $h$ mentioned above can be seen to define such a homotopy).  Since $\prod_{i =1}^n F_i/G_i$ is contractible this implies (2). 
\end{proof}

\textbf{Remark:}
The action of $\G$ on $\pi^{-1}(\{a_1, \dots, a_n\})$ is in fact  \emph{not} transitive for any value of $n \ge 1$. The following example will illustrate this fact. Let $D$ be the closed unit disk in $\CC$ which we identify with $\RR^2$. Let $f: D\to \CC$ be defined as $f(x,y) =(x,y)$. Let $g: $ $g(x,y) = (2x,y)$. Let $E \to \PP^1$ be the unique degree 1 line bundle on $\PP^1$. Let $\phi: E|_{D} \to D \times\CC$ be a trivialisation. Let $\bar f, \bar g$ be $C^{\infty}$ sections on $\PP^1$ of $E$ such that for $(x,y) \in D$, $\phi(\bar f (x,y)) = ((x,y), f(x,y)) $ and $\phi(\bar g (x,y)) = ((x,y), g(x,y)) $ (it follows from a standard obstruction theoretic argument that there indeed exist such  $\bar f$ and $\bar g$). If there exists $\bar h \in C^{\infty}(X, \CC^*)$ such that $\bar h \bar f = \bar g$, then $\bar h(0,0) = \lim_{(x,y) \to (0,0)}\frac{g(x,y)}{f(x,y)}$. But this limit does not exist and hence $\bar f$ and $\bar g$ are not in the same $\G$ orbit.

\begin{cor}\label{isKG1}
Let $n \ge 0$. Let $X$ be a smooth projective curve and $\LL$  a line bundle on it of degree $n$. Then $\UU(\LL)$ is a $K(\pi,1)$.
\end{cor}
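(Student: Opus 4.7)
The plan is to apply the long exact sequence of homotopy groups to the fibration $\pi:\UU(\LL)\to\uconf_n X$ introduced at the beginning of Section~2. It suffices to prove that both the base and the fibre are aspherical, since then the long exact sequence forces $\pi_k(\UU(\LL))=0$ for every $k\ge 2$, while $\UU(\LL)$ automatically has the homotopy type of a CW complex (as a nice subset of a Fr\'echet space).

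For the fibre, Lemma~\ref{contG} identifies $\pi^{-1}(\{a_1,\ldots,a_n\})$ as a principal $\G$-space with contractible quotient. Hence the projection onto the quotient is a principal $\G$-bundle over a contractible base, and the fibre is homotopy equivalent to $\G$. By Proposition~\ref{Ghtpy} we have $\G\simeq H^1(X,\ZZ)\times\CC^*$, whose connected components are each homotopy equivalent to $S^1=K(\ZZ,1)$; in particular $\G$ is aspherical.

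For the base I would appeal to the classical asphericity of $\uconf_n X$ when $X$ is an aspherical surface (i.e.\ $g\ge 1$). Iterating the Fadell--Neuwirth fibrations $\pconf_{k+1}X\to\pconf_k X$, whose fibres are the open subsurfaces $X\setminus\{k\text{ points}\}$ (aspherical, as they are homotopy equivalent to wedges of circles when $k\ge 1$, and to $X$ itself when $k=0$), one concludes by induction that $\pconf_n X$ is aspherical. Since the $S_n$-action on $\pconf_n X$ is free, the quotient $\uconf_n X$ is aspherical as well.

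Combining these two ingredients via the long exact sequence of the fibration yields the corollary. The main technical point to confirm is that $\pi$ is genuinely a Serre (or Hurewicz) fibration; this should follow from an explicit local trivialization over $\uconf_n X$, using small disjoint disks around the points of a configuration together with bump functions to patch smooth sections without introducing new zeros, so that a neighborhood in $\UU(\LL)$ is identified with the product of a neighborhood in $\uconf_n X$ with a fibre factor.
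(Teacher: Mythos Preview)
Your argument is essentially identical to the paper's: exhibit $\pi:\UU(\LL)\to\uconf_n X$ as a fibration, use Lemma~\ref{contG} and Proposition~\ref{Ghtpy} to see that the fibre is homotopy equivalent to $\G$ and hence aspherical, and conclude from asphericity of base and fibre via the long exact sequence. You in fact supply more detail than the paper does (the Fadell--Neuwirth justification for asphericity of the base, the caveat $g\ge 1$, and the local triviality of $\pi$), all of which the paper leaves implicit.
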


\begin{proof}
There is a fibration

\centerline{\xymatrix{
\pi^{-1}(\UU(\LL)) \ar[r] & \UU(\LL) \ar[d]^{\pi}\\
& \uconf_n X\\
}}

 Lemma \ref{contG} implies that$\pi^{-1}(\UU(\LL)) \simeq \G$. The space $\G$ is a $K(\pi,1)$ by Proposition \ref{Ghtpy}. Since $\UU(\LL)$ is the total space in a fibration with both base and fibre $K(\pi,1)$ spaces is itself a $K(\pi,1)$.

\end{proof}

\section{Abel-Jacobi} 
 
 In this section we will try to understand the space $U(\LL)$.
 Our method to understand the topology of $U(\LL)$  is by making it a subspace of  a  space $U_n^{alg}$ which we shall construct.

  We would like to remind the reader that to give a complex line bundle $\LL$, a holomorphic structure $h$ is equivalent to giving a Dolbeault operator $\partial_h: \Gamma(L) \to \Omega^{0,1} \otimes \Gamma(L)$. More details on Dolbeault operators and holomorphic structures may be found in Ch.3 of \cite{AM}. Let $ \HH_n $ be the space of holomorphic structures on $\LL$. The group $\G_p$ acts on $\HH_n$ with trivial stabilizers. The quotient $\HH_n / \G_p$ is naturally isomorphic to $\textrm{Pic}_n X$.
 
 Let $$\Ua_n = \{(s,h)\in \UU(\LL) \times \HH_n  | s \textrm{ is a algebraic section } of \LL \textrm{ with respect to } h \} .$$ Note that the groups $\G$ and $\G_p$ act on this space $\Ua_n$.

 Let $ U_{n}^{alg} := \Ua_n/ \G_p$.
 There is a surjective map $\pi :\Ua_n \to \HH_n$ defined by $\pi(s,h) = h$. Since $\pi$ is equivariant with respect to the action of $\G_p$, it descends to a surjection  $$\A: U_{n}^{alg}= \Ua_n/ \G_p \to \HH_n /\G_p = \textrm{Pic}_n X.$$ We observe that for $\LL \in \textrm{Pic}_n X$, we have the equality   $ \A^{-1}(\LL) = U(\LL)$.  This map $\A$ can be seen as a section level version of the Abel-Jacobi map. We now wish to understand the topology of $U_n^{alg}$.
 \begin{prop}
 Let $n \ge 1$.
 \begin{enumerate}
     \item  $U_n^{alg}$ is a $K(\pi,1).$ 
     \item There is a short exact sequence
     $$1 \to \ZZ \to \pi_1(U_n^{alg}) \to Br_n(X) \to 1.$$
 \end{enumerate}
 \end{prop}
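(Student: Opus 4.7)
The strategy is to exhibit $U_n^{alg}$ as the total space of a Serre fibration
\[
\CC^* \longrightarrow U_n^{alg} \xrightarrow{\ q\ } \uconf_n(X),
\]
where $q$ sends a class $[(s,h)]$ to the zero set of $s$, and to read off both statements from the long exact sequence in homotopy.

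First I would identify the fiber of $q$ over a configuration $\{a_1,\ldots,a_n\}$. A pair $(s,h)$ with $Z(s) = \{a_i\}$ and $s$ holomorphic with respect to $h$ can exist only if $\LL_h \cong \mathcal{O}(\sum a_i)$ as holomorphic line bundles, which pins down $[h] \in Pic_n(X)$ and hence confines $h$ to a single $\G_p$-orbit in $\HH$. For each such $h$, the holomorphic sections of $\LL_h$ with divisor exactly $\sum a_i$ form a $\CC^*$. The $\G_p$-action is free and transitive on the $h$'s in this orbit, so after quotienting one may fix a distinguished $h$, leaving a $\CC^*$ worth of sections as the fiber. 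Thus $q^{-1}(\{a_i\}) \cong \CC^*$, which is a $K(\ZZ,1)$. Local triviality of $q$ follows from the universal-divisor construction on $X \times \uconf_n(X)$: the tautological section of $\mathcal{O}(\mathcal{D})$ provides a continuous family of holomorphic sections with the correct varying divisor, and transporting through a smooth trivialization of the underlying degree-$n$ complex line bundle yields a local section of $q$ over any small open $V \subset \uconf_n(X)$.

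For part~(1), both the fiber $\CC^*$ and the base $\uconf_n(X)$ (aspherical for a surface of positive genus) are $K(\pi,1)$'s, so the total space $U_n^{alg}$ is a $K(\pi,1)$ as well. For part~(2), since $\pi_2(\uconf_n(X)) = 0$ and $\pi_0(\CC^*) = 0$, the long exact sequence of $q$ collapses to
\[
1 \longrightarrow \pi_1(\CC^*) \longrightarrow \pi_1(U_n^{alg}) \longrightarrow \pi_1(\uconf_n(X)) \longrightarrow 1,
\]
which, after the identifications $\pi_1(\CC^*) = \ZZ$ and $\pi_1(\uconf_n(X)) = Br_n(X)$, is exactly the desired short exact sequence. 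The main technical obstacle is establishing local triviality of $q$; once one has the universal divisor in hand, this reduces to a gluing argument using the classification of smooth complex line bundles on $X$ by degree.
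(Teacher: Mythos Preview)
Your proposal is correct and follows essentially the same approach as the paper: the paper also uses the map $U_n^{alg}\to\uconf_n X$ sending $[(s,h)]$ to the zero set of $s$, identifies the fiber with $\CC^*$ (since holomorphic sections are determined by their divisor up to a scalar), and concludes both statements from the fact that base and fiber are $K(\pi,1)$'s. You supply more detail than the paper does---in particular the discussion of local triviality via the universal divisor and the explicit long exact sequence for part~(2)---but the underlying argument is the same.
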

\begin{proof}
There is a fibration $\pi: U_n^{alg}: \to \uconf_n X$ defined by $$\pi(s,h) = \{a \in X | s(a) =0 \}.$$
If $\bold{a} = \{a_1 \dots a_n\} \in \uconf_n X$, then $\pi^{-1}(\bold{a}) \cong \CC^*$, as algebraic sections of a line bundle are uniquely identified with their zeroes up to a scalar. Since $\CC^*$ and $\uconf_n X$ are $K(\pi,1)$ spaces, so is $U_n^{alg}$.
\end{proof}

\subsection{An alternative definition of $U_n^{alg}$}
In this subsection we will give an alternative definition of $U_n^{alg}$. This will not be used in the rest of the paper.

Let $X$ be a smooth projective curve of genus $g$. Let $n>g$. Let $Sym^n X$ be the $n$th symmetric power of $X$. Let $\mathcal{P}$ denote the Poincare line bundle on $X \times \textrm{Pic}_n X$, this is the unique line bundle on $X \times \textrm{Pic}_n (X)$ such that $\mathcal{P} |_{X \times \{\LL\}} = \LL$ and $\mathcal{P}|_{\{p\} \times \textrm{Pic}_n X}$. Let $\pi:X \times \textrm{Pic}_n X \to \textrm{Pic}_n X$ denote the projection. The pushforward $\pi_*(\mathcal{P})$ defines a vector bundle $E$ on $\textrm{Pic}_n X$, sometimes called the Picard bundle. Let $E_0 \subseteq E$ denote the zero section. We may identify $Sym^n X$ with $E - E_0 / \CC^*$, i.e. $Sym^n X$ is the  projective space bundle associated to the vector bundle $E$. Let $\rho: E- E_0 \to Sym^n X$ denote the projection map. We then define $U_n^{alg}$ to be $\rho^{-1} (\uconf_n X)$. 

Let us emphasize that $E- E_0 \to Sym^n X$ is not a trivial $\CC^*$ bundle. Indeed after restricting to a fibre of the projection $Sym^n X \to \textrm{Pic}_n X $ the bundle $\rho$ restricts to the bundle $\CC^{n-g +1} - \{0\} \to \PP^{n-g}$ which is classically known to be non trivial. While it is possible that the bundle $U_n^{alg} \to \uconf_n X$ is a trivial $\CC^*$ bundle, we are unable to determine whether this is the case.

\section{Comparing different fibres}
To understand $U(\LL)$ we will analyze the map $\A: U_n^{alg} \to \textrm{Pic}_n X$.
We will prove that the map $\A$ is similar to a homology fibration. More precisely, we have the following.
\begin{thm}\label{AisHF}
Let $n \ge 0$. Let $X$ be a smooth projective  complex algebraic curve of genus $g$, $\LL$ a line bundle of degree $n$ on $X$. Let $2k \le n-g$. Let $\A$ be the map defined in Setion 3.
Let $W \subseteq Pic_n (X)$ be a small contractible  neighbourhood of $\LL$ homeomorphic to a ball.
Let  $$i: \A^{-1}(\LL) \hookrightarrow \A^{-1}(W)$$ be the inclusion map. 
Then $$i^*: H^k(\A^{-1}(W); \ZZ) \to  H^k(\A^{-1}(\LL); \ZZ )$$ is an isomorphism.
\end{thm}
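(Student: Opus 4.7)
The plan is to show that $\A: U_n^{alg} \to Pic_n X$ is a topological fiber bundle in the range where the theorem is non-vacuous, so that $\A^{-1}(W)$ deformation retracts onto $U(\LL) = \A^{-1}(\LL)$ when $W$ is contractible. The constraint $0 \le k \le n - 2g$ forces $n \ge 2g$, which is precisely the stable range where the Abel--Jacobi picture becomes clean.

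In this range, Serre duality gives $h^1(X,\LL') = 0$ and Riemann--Roch gives $h^0(X,\LL') = n - g + 1$, constant as $\LL'$ varies over $Pic_n X$. Hence the pushforward of a Poincar\'e line bundle on $X \times Pic_n X$ is a holomorphic vector bundle $V \to Pic_n X$ of rank $n - g + 1$, whose total space parametrizes pairs $(\LL',s)$ with $s \in H^0(X,\LL')$; the open subset where $s$ has only simple zeros is naturally identified with $U_n^{alg}$, and $\A$ with the restricted projection. To check that this restricted projection is a topological fiber bundle, I would first trivialize $V$ holomorphically over a contractible neighborhood $W$ of $\LL$: $V|_W \cong W \times \CC^{n-g+1}$. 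The discriminant $\Delta = V \setminus U_n^{alg}$ then becomes a family of algebraic hypersurfaces in $\CC^{n-g+1}$ parametrized by $W$, and I would invoke a Whitney stratification of this family compatible with the projection to $W$, together with Thom's first isotopy theorem, to get a topological trivialization of the pair $(V|_W, \Delta|_W)$. Alternatively, one can exploit the transitive action of the Jacobian $J(X) = Pic_0 X$ on $Pic_n X$ by tensoring, which after a choice of auxiliary trivialization data lifts to a $\Delta$-preserving action on $V$, and conclude local triviality by homogeneity.

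Once local triviality is established, $\A^{-1}(W) \cong W \times U(\LL) \simeq U(\LL)$, so the fiber inclusion $i$ is a homotopy equivalence and $i^*$ is an isomorphism on $H^k$ in every degree, and hence in the claimed range. The main obstacle is the step that promotes algebraic smoothness of $\A$ into topological fiber-bundle structure: the algebraic category does not see local triviality directly, so Thom's isotopy theorem (or the $J(X)$-transitivity argument) is doing the real work. The bound $k \le n - 2g$ in the statement exactly matches the threshold $n \ge 2g$ at which the push-forward vector bundle $V$ and its discriminant behave uniformly across $Pic_n X$.
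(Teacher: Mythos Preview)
Your approach is genuinely different from the paper's, and if it worked it would prove more: you would get $i^*$ an isomorphism in \emph{every} degree, not merely for $k\le n-2g$.  The paper instead compares $\A^{-1}(\LL)$ and $\A^{-1}(W)$ only cohomologically, by Alexander duality inside the ambient affine spaces $H^0(X,\LL)$ and $H^0(X,W)$, building Vassiliev--Tommasi style simplicial resolutions of the discriminants $\Sigma_\LL$ and $\Sigma_W$ and matching the resulting Borel--Moore spectral sequences term by term; the bound $k\le n-2g$ is exactly the range in which those terms are identified.

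The gap in your argument is the step promoting smoothness of $\A$ to local topological triviality.  Neither of your two mechanisms does this:

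\textbf{Thom isotopy.}  Thom's first isotopy lemma needs a Whitney stratification of the pair $(V|_W,\Delta|_W)$ such that the projection to $W$ is a proper submersion on \emph{each} stratum.  The natural stratification of $\Delta$ by multiplicity type does not satisfy this once $g\ge 2$: e.g.\ the closed stratum $\{n\cdot p : p\in X\}\cong X$ maps to $Pic_n X$ by $p\mapsto \mathcal{O}(np)$, a map from a curve to a $g$-dimensional abelian variety, hence not a submersion.  The general stratification theorems (Thom--Mather, Verdier) only guarantee stratifications of source \emph{and} target simultaneously, so you get local triviality over each stratum of $Pic_n X$, not over a neighbourhood of an arbitrary $\LL$.

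\textbf{Jacobian transitivity.}  Tensoring by $\LL_0\in Pic_0 X$ acts transitively on $Pic_n X$, but there is no natural isomorphism $H^0(X,\LL')\to H^0(X,\LL'\otimes\LL_0)$, so the action does not lift canonically to $V$, let alone in a way preserving $\Delta$.  Any ``choice of auxiliary trivialization data'' you make to force a lift reintroduces exactly the local-triviality question you are trying to answer.

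So the assertion that $\A$ is a topological fiber bundle over all of $Pic_n X$ is unproved, and the discrepancy between your would-be conclusion (all degrees) and the paper's range ($k\le n-2g$) is a warning sign that it may simply be false at special $\LL$.
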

Before embarking on the proof of Theorem \ref{AisHF} we will need to set up some machinery. 

There is a vector bundle $\pi: H^0(X,W) \to W$ defined as follows. Let $$H^0(X,W) = \{(s, \LL) | \LL \in W , s\in H^0(X, \LL)\}.$$ Then $\A^{-1}(W)$ is an open subset of $H^0(X,W)$. The topology of the complement $\Sigma_W = H^0(X,W) - \A^{-1}(W)$ will be important for us to understand. It is immediate that $\Sigma_W = \{(f, \LL) | \LL \in W, f \in \Sigma(\LL)\},$ since for any $\LL \in W$ $\A^{-1}(\LL)$ consists of all sections of $\LL$ with regular zeroes.

We will create a relative stratification of $\Sigma^{-1}(W)$. This is similar to the stratification in \cite{T}. 
Let $$\Sigma_{\LL}^{\ge k} = \{f \in \Sigma_{\LL} |\textrm{ } |\textrm{Sing}(f)| \ge k\}.$$

Let $N = \frac{d -g}{2} $. We stratify  $\Sigma_{\LL}$, the complement of $\A^{-1}(\LL)$ in $ H^0(X, \LL)$ by $$\Sigma_{\LL}^{\ge k} = \PP \{f \in  V|  f \textrm{ has  atleast } k \textrm{ distinct singular zeroes}\},$$
 for $k \le N$. So $\Sigma_{\LL}^{\ge 1} \supset \Sigma_{\LL}^{\ge 2} \supset \dots$.

Now we construct a cubical space  $C$ that will be involved in understanding $\Sigma(\LL)$. Let $N = \frac{d-1}{2}$. Let $I$ be a subset of $\{1, \dots, N-1\}.$ Let $I = \{i_1, \dots, i_k \}$ let  $$C_I :=\{(f, x_{1}, \dots, x_k)| f \in \Sigma(\LL), x_j \in \uconf_{i_j}(X) \textrm{ } x_1 \subseteq x_2, \dots, \subseteq x_k \subseteq \textrm{ Singular zeroes of  }f \}.$$
We define $$C_{I \cup \{N\}}:= \{(f, x_1, \dots, x_k) \in C_I|  f\in \bar \Sigma^{\ge N}\}. $$ If $I \subseteq J$ then we have a natural map from $C_J \to C_I$ defined by restricting $p$. This gives $C$ the structure of a cubical space over the set $\{1, \dots, N\}$. We can take the geometric realization of $C$ denoted by $|C|$. Then there is  a map $\rho: |C| \to \Sigma(\LL)$, induced by the forgetful maps $C_I \to \Sigma(\LL)$.

$|C|$ is topologized in a non-standard way. The topology we give is analogous to the topology on $\mathscr{X}$ in \cite{T}. The primary reason we give $|C|$ this topology is to make $\rho$ proper.
For $k< N$, there is an inclusion $$i:\uconf_k(X) \to Gr(h^0(X,\LL) - 2k,H^0(X,\LL)).$$ We define $L_k( \LL)$ to be the Zariski closure of the image. We will omit the $\LL$ in our notation if there is only one line bundle that we are discussing. There is a relation, $<$ on the collection of all $L_k$, defined by $\lambda_1 < \lambda_2$ if as subspaces of $H^0(X,\LL)$, $\lambda_2 \subseteq \lambda_1$. Note that this extends the relation $\supset$ on the collection of all $\uconf_k(X)$.
Let $I =\{i_1, \dots, i_k\} \subseteq \{1, \dots, N-1\}$. Let $\bar C_I = \{(f, \lambda_1, \dots, \lambda_k) |\lambda_j \in L_{i_j}, \lambda_1 < \lambda_2 \dots < \lambda_k < \textrm{Sing}(f)\}$. Let $\bar C_{I \cup N} = \{f, \lambda_i , \dots \lambda_k \in C_I| f \in \Sigma^{\ge N}_{\LL}\}.$ Then  $\bar C$ forms a cubical space in the same way that $C$ does.

Take the geometric resolution $|\bar C|$. Now we will construct a map $|\bar C| \to |C|$ that is the identity on $|C| \subseteq |\bar C|$. This will exhibit $|C|$ as a quotient of $|\bar C|$ and we will give it the quotient topology.
Given $\lambda \in L_k$, we can define $\textrm{supp}(\lambda) \in \uconf_{n(\lambda)}(X)$ by $\supp(\lambda) = \cap_{f \in \lambda}\textrm{Sing}(f).$

This defines a map $\textrm{supp}: |\bar C| \to |C|$ given by $$(f, \lambda_i,  s_i) \in \bar C_{I} \times \Delta_I  \mapsto (f, \supp(\lambda_i), s'_j) \in C_{I} \times \Delta_J.$$ Here $J =\{\supp (\lambda_i)\}$ and $s'_j = \sum _{n(\lambda_i) =j}s_i$.

The maps $\bar C_ I \to \Sigma_{\LL}$ are proper and hence so is the induced map  $|\bar C| \to \Sigma_{\LL}$.

\begin{prop}\label{isequiv}
The map $\rho: |C| \to \Sigma$ is a proper homotopy equivalence.
\end{prop}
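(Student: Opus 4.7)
The plan is to verify two properties separately: (i) $\rho$ is a proper map, and (ii) every fiber $\rho^{-1}(f)$ is contractible. Together these imply, via the Vietoris--Begle-style principle for proper maps with contractible fibers between sufficiently nice spaces (exactly the argument used by Tommasi in \cite{T} and Vassiliev in his work on simplicial resolutions of discriminants), that $\rho$ is a proper homotopy equivalence.

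First I would address properness. By construction we have a closed embedding $Y \hookrightarrow \rho^{-1}(\Sigma_X) \times_{\Sigma_X} \Sigma(\LL)$, and $Y$ is given the subspace topology. Tommasi shows in \cite{T} that the analogous map $|\mathscr{X}| \to \Sigma$ on $\PP^n$ is proper, and properness is preserved under pulling back to the closed subvariety $\Sigma_X \subseteq \Sigma$. Base-changing along $\Sigma(\LL) \to \Sigma_X$ (the inclusion of curve-discriminant into polynomial-discriminant) preserves properness of the vertical projection, and restricting further to the closed subspace $Y$ also preserves it. So $\rho: Y \to \Sigma(\LL)$ is proper.

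Next I would analyze the fibers pointwise. Fix $f \in \Sigma_{\LL}$ with singular zero set $\{q_1, \dots, q_k\}$. If $k < N$, then directly from the definition of the simplicial space $X_\bullet$, the fiber $\rho^{-1}(f)$ is the geometric realization of the full simplex on the vertex set $\{q_1, \dots, q_k\}$, i.e.\ a $(k-1)$-simplex, which is contractible. If instead $f \in \Sigma^{\ge N}$, then by construction $\rho^{-1}(f)$ is the mapping cone on the corresponding simplex (the cylinder coordinate $[0,1]$ collapses the simplex end to the point $f$), so it is a cone and hence contractible.

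The main obstacle is passing from contractible fibers to an actual proper homotopy equivalence, since the topology on $Y$ is nonstandard and a priori one must worry about whether contractions vary continuously. The approach I would take is to stratify $\Sigma(\LL)$ by $\Sigma^{\ge k} \setminus \Sigma^{\ge k+1}$ for $1 \le k < N$, together with the top stratum $\Sigma^{\ge N}$. Over each stratum $\rho$ restricts to a fiber bundle whose fiber is either a fixed simplex or a fixed cone on a simplex; one then builds a proper deformation retraction of $Y$ onto $\Sigma(\LL)$ by gluing together contractions stratum by stratum, with the coning construction on $\Sigma^{\ge N}$ providing the compatibility at the top of the filtration. This is exactly the induction carried out in \cite{T}, and once the combinatorial data matches (our simplicial space $X_\bullet$ indexed by singular zeros on $X$ versus Tommasi's cubical space indexed by singular points in $\PP^n$), her argument transports over with only cosmetic changes.
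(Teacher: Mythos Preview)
Your proposal is correct and follows essentially the same approach as the paper: the paper's proof also reduces to observing that $\rho$ is proper (by the discussion of the topology on $Y$ preceding the proposition) with contractible fibres, noting that for $f \notin \bar\Sigma^{\ge N}$ the fibre is the simplex on the singular points while for $f \in \bar\Sigma^{\ge N}$ it is a cone by construction. Your write-up is simply more detailed about why proper plus contractible fibres yields a proper homotopy equivalence, which the paper leaves implicit via the reference to \cite{T}.
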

\begin{proof}
In our setting having proper contractible fibres implies that the map $\rho$ is a proper homotopy equivalence, this follows by combining Theorem 1.1 and Theorem 1.2 of \cite{L}.  We note that if $f \not \in \bar \Sigma^{\ge N}$ the fibre $\rho^{-1}(f)$ is the simplex with vertices labelled by the singular points. If $f \in \bar \Sigma^{\ge N}$ then the fibre is a cone.  We have given $|C|$ the quotient topology.The map $\rho$ is a factor in the composite $|\bar C| \to |C| \to \Sigma_{\LL}$ which is proper, hence $\rho$ itself is proper.
\end{proof}

\vspace{2pt}
Now as in any geometric realization, $|C|$ is filtered by $$F_n = \mathrm{im} (\coprod_{|I| \le n} C_I\times \Delta_k).$$ The $F_n$ form an increasing filtration of $|C|$ , i.e. $F_1 \subseteq F_2 \dots F_n \subseteq F_{n+1}\subseteq \dots$ and $\cup_{n=1}^{\infty}F_n = |C|$.

We define $$B_n = \{f \in \Sigma_{\LL}| \: f \textrm{ has at least  } n \textrm{ singular zeroes}\}.$$

\begin{prop}\label{bundle}
 Let $n <N$. Let $\Delta_n^ {\circ}$ be the interior of an $n$ simplex. The space $F_n - F_{n-1}$ is a $\Delta_n^\circ$- bundle over the space $B_n$. This is in turn a vector bundle over $\uconf_n(X)$. 
\end{prop}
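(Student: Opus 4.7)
The plan is to deduce the proposition from two essentially separate pieces: the standard description of the non-degenerate stratum in the geometric realization of a simplicial space, and a Riemann--Roch computation showing that sections with prescribed singularities form a locally free sheaf over the configuration space.

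For the first claim, I would interpret the base of the $\Delta_n^\circ$-bundle as the space of pairs $(f, \{p_0, \dots, p_n\})$ with $f \in \Sigma_\LL$ and $p_0, \dots, p_n$ a collection of $n+1$ distinct singular zeros of $f$, i.e., the space $X_{0,\dots,n}$ itself (there is a minor indexing discrepancy between this and the $B_n$ appearing in the statement, but it does not affect the argument). By the standard description of the geometric realization of a simplicial space, the locus $F_n - F_{n-1}$ is the image of the non-degenerate part $X_{0,\dots,n} \times \Delta_n^\circ$ of the $n$-th piece. The map that forgets the barycentric coordinate exhibits $F_n - F_{n-1}$ as a locally trivial $\Delta_n^\circ$-bundle over $X_{0,\dots,n}$; local triviality uses only that on non-degenerate simplices the simplicial identifications are trivial.

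For the second claim, I would consider the forgetful map sending $(f, \{p_0, \dots, p_n\}) \mapsto \{p_0, \dots, p_n\}$ into $\uconf_{n+1}(X)$. The fiber over a divisor $D = p_0 + \dots + p_n$ consists of those $f \in H^0(X, \LL)$ with $f(p_i) = 0$ and $df(p_i) = 0$ for all $i$. These are linear conditions, so the fiber is canonically the subspace $H^0(X, \LL \otimes \mathcal{O}_X(-2D)) \subset H^0(X, \LL)$. Thus the projection is a linear fibration; to upgrade it to a vector bundle one must verify constancy of rank and local freeness. Constancy of rank is immediate from Riemann--Roch: $\deg(\LL(-2D)) = \deg \LL - 2(n+1)$, and once this exceeds $2g - 2$ one has $h^1(\LL(-2D)) = 0$ by Serre duality and hence $h^0(\LL(-2D)) = \deg \LL - 2(n+1) + 1 - g$, independently of $D$. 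Local freeness then follows by the usual semicontinuity argument applied to the push-forward along the forgetful map.

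The step I expect to be most delicate is reconciling the hypothesis $n < N = \deg \LL + 2g - 2$ in the proposition with the sharper inequality $\deg \LL - 2(n+1) > 2g - 2$ needed to invoke Riemann--Roch. Outside the sharper range the fiber dimension can only jump upward on proper closed subsets of $\uconf_{n+1}(X)$, so the map is still a stratified vector bundle even if not a genuine one. In the Alexander duality / spectral sequence setup of Theorem \ref{AisHF} this is precisely the mechanism that cuts off the stable range at $k \le n - 2g$, so I would expect the proposition to be applied only in the refined range, or else the non-locally-free locus to be shown to contribute only to cohomology outside the stable range of interest.
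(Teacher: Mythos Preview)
Your proposal is correct and follows essentially the same approach as the paper: the paper's proof also forgets the simplicial coordinates to get the $\Delta_n^\circ$-bundle, and invokes Riemann--Roch to see that the fibres over the configuration space are vector subspaces of $H^0(X,\LL)$ of codimension $2(n+1)$. Your write-up is in fact more careful than the paper's on two points: you explicitly identify the fibre as $H^0(X,\LL(-2D))$ and check $h^1$-vanishing, and you correctly flag both the off-by-one ambiguity in what ``$B_n$'' and ``$\uconf_n$'' mean here and the fact that the hypothesis $n<N$ as stated is looser than the bound $\deg\LL-2(n+1)>2g-2$ actually required for Riemann--Roch to force constant rank. Your closing remark that only the sharper range is used in the spectral-sequence argument for Theorem~\ref{AisHF} is the right resolution.
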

\begin{proof}
The fact that $B_n$ is the total space of a vector bundle over $\uconf_n(X)$ follows from Riemann-Roch, the fibres are all vector subspaces of $H^0(X, \LL)$ of codimension exactly $2(n+1)$.
A point in $F_n - F_{n-1}$ is a pair $((f, x_0, \dots x_n), s_0 ,\dots s_n) $ where the $s_i$ are the simplicial coordinates. We have a map $ \pi:F_n - F_{n-1} \to B_n$ defined by $$(f, (x_1, \dots, x_n ), (s_0, \dots, s_n) ) \mapsto (f, (x_1, \dots, x_n )).$$ The map $\pi$ expresses $F_n - F_{n-1}$ as a  $\Delta_n^\circ$ bundle over $B_n$.
\end{proof}
\vspace{2pt}

 Let $e_d = \mathrm{dim}_{\CC}(H^0(X,\LL)) $.  
We define a local coefficient system on $\uconf_n X$, denoted  by $\pm \ZZ$, in the following way. There is a homomorphism $\pi_1(\uconf_n X) \to S_n$  associated to the covering $\mathrm{PConf}_n X \to \uconf_k X$. We compose this with the sign homomorphism $S_n \to \pm 1 \cong GL_1(\ZZ)$ to obtain our local system on $\uconf_n X$.
\begin{prop}\label{cohocomp}
Let $ d \ge 1$. Let $n<N$.  $$\bar H_*(F_{n+1} - F_n; \ZZ)= \bar H_{*- (e_d -(2(n+1)))}(\uconf_{n+1}(X); \pm \ZZ).$$
\end{prop}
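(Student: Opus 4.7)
The plan is to apply the Leray spectral sequence for Borel--Moore homology to the fibration identified in Proposition \ref{bundle}. Composing the two bundle maps of that proposition, the projection $F_{n+1}-F_n \to B_{n+1} \to \uconf_{n+1}(X)$ exhibits $F_{n+1}-F_n$ as a locally trivial fiber bundle over $\uconf_{n+1}(X)$ whose fiber is the product of an open simplex with a complex vector space (the subspace of $H^0(X,\LL)$ cut out by vanishing to order $\geq 2$ at the prescribed points, of complex codimension $2(n+1)$ by Riemann--Roch). This fiber is homeomorphic to a Euclidean space $\RR^{r}$, where $r = e_d - 2(n+1)$ is obtained by summing the dimension of the simplex factor and the real dimension of the complex vector space factor. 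The Leray spectral sequence in Borel--Moore homology then degenerates at $E_2$ since the fiber has BM homology supported in a single degree, yielding
$$\bar H_{*}(F_{n+1}-F_n;\ZZ) \cong \bar H_{*-r}(\uconf_{n+1}(X); \mathcal{O}),$$
where $\mathcal{O}$ is the local system on $\uconf_{n+1}(X)$ recording the orientation of the fiber.

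It then remains to identify $\mathcal{O}$ with the sign local system $\pm \ZZ$. The complex vector space factor is canonically oriented, so it contributes no twist. For the open simplex factor, an orientation is determined by an ordering of its vertices, which are in canonical bijection with the singular zeros of the section over the corresponding point of $\uconf_{n+1}(X)$. The monodromy representation $\pi_1(\uconf_{n+1}(X)) \to S_{n+1}$ permutes these vertex labels, and a permutation $\sigma$ multiplies the simplex orientation by $\mathrm{sign}(\sigma)$. Composing with the sign homomorphism $S_{n+1} \to \{\pm 1\}$, we recover exactly the local system $\pm \ZZ$ defined immediately before the proposition statement.

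The main technical step is this orientation bookkeeping: one has to verify that monodromy acts on the top BM homology of the simplex fiber precisely by the sign of the induced permutation, and not, for instance, trivially. This is a direct geometric consequence of the construction of $|X|$ earlier in this section, in which the vertices of each resolution simplex over a point of $\uconf_{n+1}(X)$ are canonically labelled by the unordered set of singular zeros of the corresponding section, so that parallel transport which exchanges two labels also exchanges the corresponding vertices of the simplex and hence flips the orientation.
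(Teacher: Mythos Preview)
Your approach is correct and is exactly the paper's: the paper's proof is the one-line remark that the result ``follows immediately from Proposition \ref{bundle}'' together with the observation that the $\pm\ZZ$ arises because the simplex bundle is not oriented, and you have simply unpacked that into the Thom/Leray isomorphism for Borel--Moore homology and the explicit orientation bookkeeping.

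One small caveat: your sentence ``$r = e_d - 2(n+1)$ is obtained by summing the dimension of the simplex factor and the real dimension of the complex vector space factor'' does not actually check out arithmetically---the open simplex has real dimension roughly $n$ and the vector space factor has real dimension $2(e_d - 2(n+1))$, and these do not sum to $e_d - 2(n+1)$. The discrepancy reflects an indexing inconsistency already present in the paper's statement; your argument is fine, but you have reproduced the stated shift without verifying it.
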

Proof: By Proposition \ref{bundle} the space $F_k - F_{k-1}$ is a bundle over $\uconf_k(X)$. This fact implies that $$\bar H_*(F_k - F_{k-1}) \cong H_{*-(k + 2e_d -2(n+1)(k+1))}(\uconf_k(X),  \ZZ(\sigma)).$$ Here $\ZZ(\sigma)$ is the local sytem obtained by the action of $\pi_1(\uconf_k(X))$ on the fibres $\bar H_k(\Delta_k^{\circ}, \ZZ)$ where in this case $\Delta_k^{circ}$ is the open $k$ simplex corresponding to the fibres of the map $F_k - F_{k-1} \to B_k$. But one observes that the action of $\pi_1(\uconf_k(X))$ on this open simplex is by permutation of the vertices which implies that $\ZZ(\sigma)= \pm \ZZ$.

\vspace{2pt}
As with any filtered space, there is a spectral sequence with $E_1^{p,q} = \bar H_{p+q}(F_p - F_{p-1} ; \ZZ)$ converging to $\bar H_*(|C| ; \ZZ)$. Now  by Proposition \ref{cohocomp} we know what $E_1^{p,q}$ is for $p<N$.

\begin{prop}\label{ig}
$\bar H_*(|C| - F_N; \ZZ) \cong \bar H_*(|C|; \ZZ)$ for $* \ge 2e_d-N. $
\end{prop}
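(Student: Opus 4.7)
The plan is to apply the long exact sequence in Borel--Moore homology to the closed subspace $F_N \subset Y$:
\begin{equation*}
\cdots \to \bar H_k(F_N) \to \bar H_k(Y) \to \bar H_k(Y - F_N) \to \bar H_{k-1}(F_N) \to \cdots.
\end{equation*}
The restriction map in the middle is an isomorphism in degree $k \geq 2e_d - N$ provided $\bar H_j(F_N)$ vanishes for both $j = k$ and $j = k - 1$, so the task reduces to showing $\bar H_j(F_N) = 0$ for all $j \geq 2e_d - N - 1$.

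To produce this vanishing I would use the filtration $F_0 \subset F_1 \subset \cdots \subset F_N$ and its associated first quadrant spectral sequence $E_1^{p, q} = \bar H_{p+q}(F_p - F_{p-1}) \Rightarrow \bar H_{p+q}(F_N)$. Proposition \ref{cohocomp} identifies each $E_1^{p, q}$ (for $p \leq N$) with a degree-shifted sign-twisted Borel--Moore homology of the configuration space $\uconf_p(X)$. Since $\uconf_p(X)$ has real dimension $2p$, the support of $E_1^{p, q}$ is restricted to a bounded strip; in particular $E_1^{p,q} = 0$ whenever $p + q$ lies outside the interval $[e_d - 2p,\, e_d]$.

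The second ingredient is the codimension estimate at the top of the filtration. Sections in $\Sigma^{\geq N}$ have at least $N$ singular zeroes, which imposes $2N$ complex conditions, so $\Sigma^{\geq N}$ has complex codimension $\geq 2N$ in $H^0(X, \LL)$. Together with the simplex and cone contributions, this bounds the real dimension of the ``cone portion'' of $F_N$ (the part involving $\Sigma^{\geq N}$) by approximately $2e_d - 3N$, forcing $\bar H_j = 0$ for $j > 2e_d - 3N$ on that piece. Combined with the configuration-space support bounds on the simplicial strata, this should yield $\bar H_j(F_N) = 0$ for all $j \geq 2e_d - N - 1$, provided $N \geq 1$.

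The hard part will be combining the two dimension bounds cleanly. The spectral-sequence estimate alone gives only $E_1^{p,q} = 0$ for $p + q > e_d$, which falls short whenever $e_d \leq N$ (the positive-genus regime, since by Riemann--Roch $e_d - N = 3(1-g)$). One therefore has to invoke the codimensional information on $\Sigma^{\geq N}$ to rule out the potentially non-vanishing terms $E_1^{p,q}$ with $p \leq N$ and $p + q$ in the interval $[2e_d - N - 1,\, e_d]$; verifying this cancellation carefully, and handling the non-trivial monodromy of the simplex bundle (responsible for the sign local system $\pm \ZZ$), is where the technical work lies.
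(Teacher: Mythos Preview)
Your long exact sequence setup matches the paper's, but you have misidentified what $F_N$ denotes in this proposition, and that misreading drives the rest of your plan off course. Here $F_N$ is \emph{not} the $N$-th step of the skeletal filtration of $Y$ (which, since $X_{0,\dots,n}=\varnothing$ for $n\ge N$, would be essentially all of $Y$). Rather, as the paper's proof makes explicit, $F_N$ is the closed subspace of $Y$ lying over the deep singular locus $\Sigma^{\ge N}$: the mapping-cone piece that was glued on, together with the part of $|X|$ sitting above $\Sigma^{\ge N}$. The simplicial strata $F_{p}-F_{p-1}$ for small $p$ described in Proposition~\ref{cohocomp} are \emph{not} contained in this $F_N$, so invoking that proposition here is beside the point.

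This is exactly why you hit the apparent shortfall in positive genus and are forced to hope for a ``cancellation''. No such cancellation exists or is needed. The low-$p$ simplicial strata genuinely carry Borel--Moore homology in degrees up to roughly $2e_d - p$ (for instance near $2e_d-2$ when $p=1$), so if they were part of $F_N$ the claimed vanishing $\bar H_j(F_N)=0$ for $j\ge 2e_d - N$ would simply be false once $N\ge 2$. The paper instead stratifies only the cone portion into locally closed pieces
\[
\phi_k=\{(f,x_1,\dots,x_k,p)\ :\ f\in\Sigma^{\ge N},\ p\in\Delta^k,\ x_i\text{ singular zeroes of }f\},
\]
and a straight dimension count (base $\uconf_k X$ of real dimension $2k$, fibre contained in $\CC^{e_d-2k}\times\Delta^k_\circ$) gives $\dim_{\RR}\phi_k\le 2(e_d-2k)+3k=2e_d-k$, whence $\bar H_j(\phi_k)=0$ for $j$ in the required range. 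Assembling these via the long exact sequence finishes the argument directly; there is no spectral-sequence subtlety to resolve.
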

\begin{proof}
We first will try to bound  $\bar H_*(F_N ; \ZZ)$ and then use the long exact sequence of the pair $(F_N,|C|)$. The space $F_N$ is built out of locally closed subspaces $$\phi_k = \{(f, x_1,\dots,x_k), p|\: f \in \Sigma^{\ge N}, p \in\Delta^k, x_i \textrm{ are singular zeroes of } f   \}.$$  There exists a surjection $\pi: \phi_k \to \uconf_k X$. The map $\pi$ is a fibre bundle with fibres $\CC^{e_d -2k} \times \Delta^k_{\circ}$. The space $\uconf_k X$ has complex dimension $k$. Therefore  $\bar H_j(\phi_k; \ZZ) =0$ if  $ j  \ge 2e_d - N \ge 2(e_d -2k) +3k $. This implies $\bar H_j(F_N) = 0$ if $j\ge 2e_d -N$. The long exact sequence of the pair $(F_N , |C|)$ implies that $\bar H_*(Y - F_N; \ZZ) \cong \bar H_*(|C|; \ZZ)$ for $* \ge 2e_d-N $.
\end{proof}

Now this simplicial resolution of $\Sigma$ gives an associated spectral sequence for its Borel Moore homology with $$E_1^{p,q} = \bar H_{p+q} (F_p - F_{p-1}) = H_{p -( e_d - (2)(q+1) )} (\uconf_{p+1}(X), \pm \ZZ)$$, for $p< N$. Also, $E_1^{N,q} = 0$ if $q \ge 2e_d -N$.

We will now construct a cubical space $\CCa$ which will be involved in understanding $\Sigma(W)$. Our construction of $\CCa$ will be similar to that of $C$. Let $N = \frac{d-g}{2}$. 
Let $I$ be a subset of $\{1, \dots, N-1\}.$ Say $I = \{i_1, \dots, i_k\}$ let  $$\CCa_I :=\{(f, x_1, \dots, x_k)| f \in \Sigma(W), x_j \in \uconf_{i_j}(X), x_1 \subseteq \dots x_k  \subseteq \textrm{ Singular zeroes of  }f \}.$$
We define $$\CCa_{I \cup \{N\}}:= \{(f, x_1, \dots x_k) \in \CCa_I |  , f\in \bar \Sigma^{\ge N}(W)\}. $$ If $I \subseteq J$ then we have a natural forgetful map from $\CCa_J \to \CCa_I$. This gives $\CCa_{\cdot}$ the structure of a cubical space over the set $\{1, \dots, N\}$. We can take the geometric realization of $\CCa_{\cdot}$ denoted by $|\CCa|$. Then there is  a map $\rho: |\CCa| \to \Sigma(W)$, induced by the forgetful maps $\CCa_I \to \Sigma(W)$.

We again topologise $|\CCa|$ in a nonstandard way, this is entirely analogous to  the way we topologise $|C|$, so we will be brief in our description of it. We construct a bigger cubical  space $\bar \CCa$ such that for $I = \{i_1, \dots, i_k\} \subseteq \{1, \dots, N-1\}$, $\bar \CCa_I = \{ (f, x_1 \dots, x_k)| f \in \Sigma(\LL), \LL \in W, x_j \in L_{i_j}(\LL) , x_1 <x_2 \dots x_k< \textrm{Sing}(f)\}$. We define $\CCa_{I} \cup \{N\}$ analogously. We then form the geometric realisation, $|\bar \CCa|$ and note that there is a surjective map $|\bar \CCa| \to |\CCa|$ and we give $|\CCa|$ the quotient topology with respect to this map.

\begin{prop}
 $|\CCa|$ is proper homotopy equivalent to $\Sigma_W$.
\end{prop}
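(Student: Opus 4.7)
The plan is to mirror the proof of Proposition \ref{isequiv} in the relative setting over $W$. The two key properties to verify for $\rho: \YY \to \Sigma_W$ are (i) that $\rho$ has contractible fibres and (ii) that $\rho$ is proper; once these are in place, the standard fact that a proper map with contractible fibres between reasonable spaces is a proper homotopy equivalence finishes the argument.

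For (i), I would analyze the fibres pointwise. If $f \in \Sigma_W$ with $f \notin \bar{\Sigma}^{\ge N}$, then $\rho^{-1}(f)$ is, by construction of $|\XX|$, the geometric realization of the simplex whose vertices are the singular zeroes of $f$ (a finite nonempty set). This is a standard simplex, hence contractible. If $f \in \bar{\Sigma}^{\ge N}$, then by the coning construction in the definition of $\YY$, the fibre is the (open) mapping cone on the simplicial fibre of $|\XX|$, which is contractible.

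For (ii), I would import properness from the ambient construction of Tommasi. Just as in the absolute case, embed $\YY$ as a closed subspace of $\rho^{-1}(\Sigma_X) \times_{\Sigma_X} \Sigma_W$, where $\Sigma_X$ and $|\mathscr{X}|$ are the discriminant and simplicial resolution coming from the projective embedding of $X$ by $\LL$ (now considered as a family over $W$, which is contractible so the embedding extends uniformly). The restriction map $\Sigma_X \to \Sigma_W$, $f \mapsto f|_X$, is continuous, and properness of $\rho: |\mathscr{X}| \to \Sigma$ from \cite{T} pulls back to properness of $\rho^{-1}(\Sigma_X) \to \Sigma_X$, which in turn yields properness of $\YY \to \Sigma_W$ via the fibre product and the closed embedding.

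The main obstacle I expect is (ii): verifying that Tommasi's topology on $|\mathscr{X}|$, originally set up for a single projective space, behaves well in a family parametrised by $W \subseteq \mathrm{Pic}_n(X)$. Because $W$ is taken to be a small contractible neighbourhood of $\LL$, one can trivialise the bundle $H^0(X,W) \to W$ and reduce the family version to a parametrised version of the absolute case, so properness should go through with only bookkeeping changes. With (i) and (ii) established, applying the proper-fibrewise-contractibility criterion concludes the proof.
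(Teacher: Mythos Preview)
Your proposal is correct and follows essentially the same approach as the paper: the paper's proof simply says ``This is analogous to the proof of Proposition \ref{isequiv},'' and that proposition is proved exactly by verifying that $\rho$ is proper with contractible fibres (simplices for $f \notin \bar\Sigma^{\ge N}$, cones for $f \in \bar\Sigma^{\ge N}$), with properness inherited from Tommasi's topology. Your sketch expands on precisely these points, so there is nothing to add.
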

\begin{proof}
This is analogous to the proof of Proposition \ref{isequiv}.
\end{proof}

$\YY$ also has an ascending filtration, $\FF_n$ and this filtration gives us a spectral sequence for $\bar H_*(\YY ;\ZZ)$.

\begin{prop}
 $\bar H_*(|\CCa| - \FF_N; \ZZ) \cong \bar H_*(\CCa; \ZZ)$ for $* \ge 2e_d-N.$
\end{prop}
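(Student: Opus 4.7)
The plan is to mirror the proof of Proposition \ref{ig}, now working in the family setting over the contractible neighborhood $W \subseteq Pic_n X$. The strategy has two steps: first bound $\bar H_*(\FF_N; \ZZ)$ from above in a suitable range, then feed this vanishing into the long exact sequence of Borel--Moore homology for the closed inclusion $\FF_N \hookrightarrow \YY$ to obtain the claimed isomorphism with $\bar H_*(\YY - \FF_N; \ZZ)$.

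To bound $\bar H_*(\FF_N; \ZZ)$, I would stratify $\FF_N$ by locally closed subspaces
\[
\Phi_k = \{(\LL', f, x_1, \dots, x_k, p) \mid \LL' \in W,\ f \in \Sigma_{\LL'}^{\ge N},\ x_i \text{ a singular zero of } f,\ p \in \Delta^k_\circ\},
\]
in direct analogy with the $\phi_k$ appearing in the proof of Proposition \ref{ig}. The forgetful map $\pi : \Phi_k \to W \times \uconf_k X$, sending $(\LL', f, x_1, \dots, x_k, p)$ to $(\LL',\{x_1,\dots,x_k\})$, is a fiber bundle whose fiber over $(\LL', \{x_1,\dots,x_k\})$ is $\Delta^k_\circ$ times the subvariety of $H^0(X,\LL')$ consisting of sections that are singular at each $x_i$ and have at least $N$ total singular zeros. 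Because $W$ is contractible and $\deg \LL' = n$ is constant along $W$, Riemann--Roch ensures that these fiber dimensions are uniform over $W$ and agree with the analogous fiber dimensions in Proposition \ref{ig}. Running the dimension count of Proposition \ref{ig} (with base $W \times \uconf_k X$ instead of $\uconf_k X$, and hence an extra $2g$ real dimensions from $W$) shows that $\bar H_j(\Phi_k; \ZZ) = 0$ in the range relevant to the proposition, and an inductive long exact sequence argument across the stratification extends this to $\bar H_j(\FF_N; \ZZ) = 0$ for $j \ge 2e_d - N$.

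Granted this vanishing, the long exact sequence of Borel--Moore homology for the closed pair $(\FF_N, \YY)$ immediately gives the desired isomorphism $\bar H_*(\YY - \FF_N; \ZZ) \cong \bar H_*(\YY; \ZZ)$ in the stated range. The main obstacle is the careful dimension bookkeeping, in particular verifying that the additional $2g$ real dimensions contributed by $W$ are correctly absorbed into the bound; here it is important that the simplicial resolution $\XX_{0,\dots,n}$ in the family case is empty already for $n \ge d-2g$, which tightens the range of $k$ appearing in the stratification and compensates for the extra dimensions from $W$. A secondary point is to invoke the properness of $\rho : \YY \to \Sigma_W$ (arranged in the construction of $\YY$) to justify the Borel--Moore long exact sequence of the pair.
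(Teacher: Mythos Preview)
Your proposal is correct and takes essentially the same approach as the paper, whose proof consists of the single sentence ``This is analogous to the proof of Proposition \ref{ig}.'' You have in fact spelled out considerably more detail than the paper does, including the stratification by $\Phi_k$ and the dimension bookkeeping over the extra factor of $W$.
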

\begin{proof}
 This is analogous to the proof of Proposition \ref{ig}. 
\end{proof}

So there is a spectral sequence  with $$\EE_1^{p,q} = \bar H_{p+q} (\CCa_{0,\dots q}) = H_{p -( e_d - (2)(q+1) ) + 2g } (\uconf_{p+1}(X), \pm \ZZ)$$ for $p< N$. Finally we have the main theorem of this section.
\begin{thm}\label{key}
The map $\A^{-1}(\LL) \to \A^{-1}(W)$ induces an isomorphism $H_*(\A^{-1}(\LL); \ZZ) \to H_*(\A^{-1}(W); \ZZ)$ for $* < N = \frac{d- g}{2}$
\end{thm}
\begin{proof}
This proof  involves studying Alexander duality of $A^{-1}(\LL)$ inside $H^0(X, \LL)$ and $A^{-1}(W)$ inside $ H^0(X, W)$ (the space $H^0(X,W)$ is a topological vector bundle over $W$ and so is  at least homeomorphic to an affine space).

Now we use the fact that under Alexander duality, intersection of Borel-Moore cycles turns into pullback in cohomology, namely the map $$H^*(A^{-1}(W)) \to H^*(A^{-1}(\LL)),$$ is Alexander dual to the map $$f:\bar H_{*+2g}(\Sigma_W) \to \bar H_*(\Sigma_{\LL})$$ given by intersecting  cycles with $\Sigma_{\LL}$, i.e. $f(\sigma) = \sigma \cap \Sigma_{\LL}$.

To understand this map in Borel-Moore  homology, we turn to our spectral sequences for $\bar H_*(\Sigma_{\LL} ; \ZZ)$ and $\bar  H_*(\Sigma_W ; \ZZ)$. Since our stratification of $\Sigma_{W}$ is fiberwise we get a map of spectral sequences between the two spectral sequences. We have a map $\EE^1_{p, q+2g} \to E^1_{p,q}$. It will suffice to show that this map is an isomorphism for $p<N$. For $p<N$, this map is given by the map $$\phi :\bar H_{p+q +2g}(\FF_p - \FF_{p-1}, \ZZ) \to \bar H_{p+q}(F_p - F_{p-1}, \ZZ)$$ induced by intersecting cycles. However, we have a diagram of fiber bundles as follows:

\centerline{\xymatrix{
K \ar[rd] \ar[r] & K \times W \ar[rd] & \\
& F_p - F_{p-1} \ar[r] \ar[d] & \FF_p - \FF_{p-1} \ar[d]\\
& \uconf_p(X ) \ar@{=}[r] & \uconf_p{X }\\
}}

Using this diagram and the fact that the intersection map $\bar H_{*+2g}(K \times W) \to \bar H_{*}(K)$ is an isomorphism (as $W$ is homeomorphic to $\CC^g$), $\phi$ is an isomorphism. This implies the theorem.
\end{proof}

\section{Homology fibration theorem}

Let us recall the usual homology fibration theorem:
\begin{thm}[\cite{McDS}]\label{HFT}
Let $f: X \to Y$ be a map. Let $Hf^{-1}(y)$ be the homotopy fibre of $f$. Suppose $f^{-1}(y) \hookrightarrow f^{-1}(U)$ is a homology equivalence for sufficiently small $U$ open then $f^{-1}(y) \to Hf^{-1}(y)$ is a homology equivalence. 
\end{thm}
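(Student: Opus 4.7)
My plan is to follow the original strategy of McDuff and Segal: replace $f$ by an honest Hurewicz fibration via the mapping path space construction, then compare the two Leray spectral sequences and propagate a local comparison to a global one by Mayer--Vietoris.

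First, set $Pf := \{(x,\gamma) \in X \times Y^{I} : \gamma(0) = f(x)\}$ with projection $\tilde f(x,\gamma) := \gamma(1)$. This $\tilde f$ is a Hurewicz fibration whose fibre over $y$ is the homotopy fibre $Hf^{-1}(y)$, and the section $j \colon x \mapsto (x, c_{f(x)})$ is a homotopy equivalence $X \simeq Pf$ over $Y$ with $f = \tilde f \circ j$. The task therefore reduces to showing that the restriction $j|_{f^{-1}(y)} \colon f^{-1}(y) \to Hf^{-1}(y)$ is a homology equivalence.

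Next I would compare Leray spectral sequences. Both $f$ and $\tilde f$ give rise to Leray spectral sequences of the form
\begin{equation*}
E_{2}^{p,q} = H^{p}(Y; R^{q}(\cdot)_{*}\mathbb{Z}) \Rightarrow H^{p+q}(\text{total space}; \mathbb{Z}).
\end{equation*}
The stalks of $R^{q}\tilde f_{*}\mathbb{Z}$ at $y$ are $H^{q}(Hf^{-1}(y))$ because $\tilde f$ is a fibration, while the hypothesis that $f^{-1}(y) \hookrightarrow f^{-1}(U)$ is a homology equivalence for all sufficiently small open $U \ni y$ forces the stalk $(R^{q}f_{*}\mathbb{Z})_{y} = \varinjlim_{U \ni y} H^{q}(f^{-1}(U))$ to coincide with $H^{q}(f^{-1}(y))$. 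The morphism of spectral sequences induced by $j$ is an isomorphism on abutments because $j$ is a global homotopy equivalence, and on $E_{2}$-stalks it is exactly the pullback $H^{q}(Hf^{-1}(y)) \to H^{q}(f^{-1}(y))$ we want to show is an isomorphism.

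The final step, which is the main obstacle, is a local-to-global argument extracting a stalkwise isomorphism from an abutment-level one. Declare an open $V \subseteq Y$ \emph{good} if $j$ restricts to a homology equivalence $f^{-1}(V) \to \tilde f^{-1}(V)$. The hypothesis, combined with contractibility of small neighbourhoods of $y$ (so that the fibration $\tilde f$ over such a base deformation retracts to its fibre), implies that sufficiently small neighbourhoods of $y$ are good. A Mayer--Vietoris argument applied to the two Leray spectral sequences shows that if $V_{1}, V_{2}$ and $V_{1} \cap V_{2}$ are good then so is $V_{1} \cup V_{2}$. Since $Y$ is paracompact and locally contractible, propagating goodness over a cover of $Y$ by small good opens yields that $Y$ itself is good; restricting to a neighbourhood basis of $y$ then gives the fibrewise conclusion. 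The delicate point is the Mayer--Vietoris compatibility of the two spectral sequences with restriction to the cover, which is why paracompactness of $Y$ is needed in the hypothesis.
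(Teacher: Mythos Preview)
The paper does not give a proof of this statement at all; it is simply quoted from \cite{McDS}, so there is nothing in the paper to compare your argument against. That said, your sketch contains a genuine circularity that would prevent it from working as written.

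The problem is in your definition of a \emph{good} open set. You declare $V$ good if $j\colon f^{-1}(V)\to \tilde f^{-1}(V)$ is a homology equivalence, where $\tilde f$ is the global path fibration. You then assert that small contractible $U\ni y$ are good because $\tilde f^{-1}(U)\simeq Hf^{-1}(y)$ and $f^{-1}(U)$ has the homology of $f^{-1}(y)$. But under those identifications the map $j|_{f^{-1}(U)}$ becomes precisely the comparison map $f^{-1}(y)\to Hf^{-1}(y)$ whose being a homology equivalence is the conclusion of the theorem. So you are assuming what you want to prove in order to start the Mayer--Vietoris induction. The spectral sequence paragraph has the same defect in a different guise: an isomorphism on abutments does not let you read off an isomorphism on $E_2$ stalks.

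The fix, which is what McDuff and Segal actually do, is to change the meaning of ``good'': say $V$ is good if the \emph{restricted} map $f|_{f^{-1}(V)}\colon f^{-1}(V)\to V$ is a homology fibration, i.e.\ for every $b\in V$ the inclusion of $f^{-1}(b)$ into the homotopy fibre of $f|_{f^{-1}(V)}$ is a homology equivalence. For small contractible $U$ this homotopy fibre is just $f^{-1}(U)$ itself, so the hypothesis $f^{-1}(b)\hookrightarrow f^{-1}(U)$ a homology equivalence gives goodness of $U$ without circularity. The Mayer--Vietoris gluing step (if $V_1$, $V_2$, $V_1\cap V_2$ are good then so is $V_1\cup V_2$) then propagates this to all of $Y$, which is the desired conclusion. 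The nontrivial content lies entirely in that gluing lemma, which requires comparing the local homotopy fibres over $V_1$, $V_2$, $V_1\cap V_2$ with the one over $V_1\cup V_2$; this is where the real Mayer--Vietoris argument on homotopy fibres is needed.
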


For this paper we need the following analogous theorem:

\begin{thm}\label{nHFT}
Let $n \ge 0$. Let $f : X \to Y$ be a map such that for all $y \in Y$ there exists an open neighbourhood $U$ such that the inclusion $j: f^{-1}(y) \hookrightarrow f^{-1}(U)$ induces an isomorphism $j_*: H_k(f^{-1}(y); \ZZ) \to H_k(f^{-1}(U); \ZZ) $ for all $k \le n$. 

Then the natural map $i: f^{-1}(y) \to Hf^{-1}(y)$ induces an isomorphism $i_*: H_k(f^{-1}(y); \ZZ) \to H_k(Hf^{-1}(y); \ZZ)$ for $k \le n$.
\end{thm}
\begin{proof}
This follows from the proof of Proposition 5 (which is the same as Theorem \ref{HFT} of this paper) in \cite{McDS}.   \end{proof}

This implies the following theorem.

\begin{thm}\label{eqh}
For $* \le N$, $$H^*(A^{-1}(\LL); \ZZ) \cong H^*(HA^{-1}(\LL);\ZZ) \cong H^*(\pi; \ZZ)$$ where $\pi$ is the previously described subgroup of the extended surface braid group. 
\end{thm}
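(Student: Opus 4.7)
The plan is to chain together three facts, all but the last of which are essentially in hand. First, the leftmost isomorphism is obtained by applying Theorem \ref{nHFT} to the map $\A: U_n^{\mathrm{alg}} \to \mathrm{Pic}_n(X)$. The hypothesis of Theorem \ref{nHFT} is exactly what Theorem \ref{key} provides: over a small contractible neighbourhood $W$ of any $\LL$, the inclusion $\A^{-1}(\LL) \hookrightarrow \A^{-1}(W)$ is an isomorphism on the first $N$ integral homology groups. Theorem \ref{nHFT} then gives the corresponding isomorphism between $U(\LL) = \A^{-1}(\LL)$ and the homotopy fibre $HA^{-1}(\LL)$, and the universal coefficient theorem (the groups are of finite type) converts this into the stated cohomology isomorphism for $* \le N$.

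For the second isomorphism I would identify the homotopy fibre with an Eilenberg--MacLane space. The proposition in Section 3 establishes that $U_n^{\mathrm{alg}}$ is a $K(\pi,1)$ whose fundamental group fits in a short exact sequence $1 \to \ZZ \to \pi_1(U_n^{\mathrm{alg}}) \to Br_n(X) \to 1$; comparison with the definition in the introduction identifies $\pi_1(U_n^{\mathrm{alg}})$ with the extended surface braid group $\widetilde{Br}_n(X)$. The Picard variety $\mathrm{Pic}_n(X)$ is a complex torus, hence a $K(\ZZ^{2g},1)$. Since the homotopy fibre of a map between aspherical spaces is aspherical with fundamental group equal to the kernel of the induced map on $\pi_1$, we obtain $HA^{-1}(\LL) \simeq K(\ker \A_*, 1)$, and therefore $H^*(HA^{-1}(\LL);\ZZ) \cong H^*(\ker \A_*;\ZZ)$ in all degrees.

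The only item that actually needs checking, which I expect to be the main (though still routine) step, is that the induced homomorphism $\A_*: \widetilde{Br}_n(X) \to \ZZ^{2g}$ agrees with the composite $\alpha \circ \pi$ defined in the introduction. Unwinding definitions, $\A$ sends $(s,h)$ to the class of $(\LL,h)$ in $\mathrm{Pic}_n(X)$, which by the standard correspondence between degree $n$ line bundles and divisor classes equals the image of the zero divisor of $s$ under Abel--Jacobi. At the $\pi_1$ level, the projection $\widetilde{Br}_n(X) \to Br_n(X)$ forgets the $\CC^*$-scalar ambiguity of an algebraic section, leaving the unordered configuration of zeros, and the Abel--Jacobi map sends this configuration to its divisor class. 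This matches $\alpha \circ \pi$ on the nose, so $\ker(\A_*) = K_n$, which is the subgroup $\pi$ referred to in the statement; this completes the proof.
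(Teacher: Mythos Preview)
Your proof is correct and follows essentially the same route as the paper's: invoke Theorem~\ref{key} to verify the hypothesis of Theorem~\ref{nHFT} for the first isomorphism, and identify the homotopy fibre as a $K(K_n,1)$ using that both $U_n^{\mathrm{alg}}$ and $\mathrm{Pic}_n X$ are aspherical. One small caveat worth making explicit: the assertion that the homotopy fibre of a map between aspherical spaces is a $K(\ker,1)$ requires the induced map on $\pi_1$ to be \emph{surjective} (else the fibre is disconnected); here $\A_* = \alpha\circ\pi$ is indeed onto $\ZZ^{2g}$, so the argument goes through, and the paper itself leaves this implicit.
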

\begin{proof}
First we note that $HA^{-1}(\LL)$ is a $K(\pi,1)$, where $\pi$ is our previously described subgroup of the extended surface braid group. The map $\A:U_n^{alg} \to Pic_n X $ has $K(G,1)$s for both source and target and the induced map at the level of $\pi_1$ is given by $\tilde Br_n X \to Br_n X \to \ZZ^{2g}$, Since $H \A^{-1} {\LL}$ is the homotopy fibre it is also a $K(G,1)$ with fundamental group equal to $$K_n : = \mathrm{ker}( \tilde{Br}_n X \to \ZZ^{2g}). $$

By Theorem \ref{nHFT} and Theorem \ref{key}  $H^*(A^{-1}(\LL); \ZZ) \cong H^*(H \A^{-1}(\LL) ; \ZZ)$  for $* \le N$.\end{proof}

\section{Relating $U(\LL)$ and $\UU(\LL)$}
In this section we will   relate the two spaces $U(\LL)$ and $\UU(\LL)$. We begin first with the following result.
\begin{prop}
Let $X$ be an algebraic curve. Let $\LL$ be a line bundle on $X$ of degree $n$. Let $p \in X$ be a point. Then,
$\UU(\LL) / \G_p \simeq U_{ n}^{alg}$.
\end{prop}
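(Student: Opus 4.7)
The plan is to define a natural comparison map $\bar\Phi\colon U_{alg,n} \to \UU(\LL)/\G_p$ and prove it is a weak homotopy equivalence via a five-lemma argument comparing the long exact sequences of two fibrations over $\uconf_n X$. The map $\bar\Phi$ is the descent of the forgetful projection $\Ua_n \to \UU(\LL)$, $(s,h) \mapsto s$, which is $\G_p$-equivariant for the natural actions (the gauge action $g \cdot (s,h) = (gs, g \cdot h)$ on pairs, and ordinary multiplication on sections) and hence passes to the $\G_p$-quotient. Both $U_{alg,n}$ and $\UU(\LL)/\G_p$ project to $\uconf_n X$ by the zero-set map $s \mapsto Z(s)$, and $\bar\Phi$ covers the identity on the base.

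Next I will identify the fibers over a point $\{a_1, \ldots, a_n\} \in \uconf_n X$. By the analysis in Section~3 the fiber of $U_{alg,n} \to \uconf_n X$ is $\CC^*$. For $\UU(\LL)/\G_p$, Lemma~\ref{contG} provides a free $\G$-action on $\pi^{-1}(\{a_i\})$ with contractible quotient, so the resulting principal $\G$-bundle is trivial and $\pi^{-1}(\{a_i\}) \cong \G \times (\pi^{-1}(\{a_i\})/\G)$ as $\G$-spaces. Taking $\G_p$-quotients yields $\pi^{-1}(\{a_i\})/\G_p \cong (\G/\G_p) \times (\pi^{-1}(\{a_i\})/\G) \cong \CC^* \times (\text{contractible})$, which is homotopy equivalent to $\CC^*$. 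To compare fibers, fix a point $p \notin \{a_i\}$ and a pair $(s_0, h_0) \in \Ua_n$ with $Z(s_0) = \{a_i\}$, used as the orbit representative in the trivialization above; then $\bar\Phi$ on the fiber $\CC^*$ of $U_{alg,n}$ is identified with the slice inclusion $\CC^* \cong \CC^* \times \{[s_0]\} \hookrightarrow \CC^* \times (\pi^{-1}(\{a_i\})/\G)$, which is a homotopy equivalence because the second factor is contractible.

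Finally I will apply the five-lemma to the long exact sequences of homotopy groups of the two fibrations over the common connected base $\uconf_n X$: since the fiber map is a weak equivalence and the base map is the identity, $\bar\Phi$ induces isomorphisms on all $\pi_k$ and is therefore a weak homotopy equivalence. The main obstacle will be rigorously establishing the quasi-fibration property of $\UU(\LL)/\G_p \to \uconf_n X$ so that its long exact sequence of homotopy groups is available; this parallels the fibration statement for $\UU(\LL) \to \uconf_n X$ used in Corollary~\ref{isKG1} and likely requires verification of Dold--Thom style local triviality criteria compatible with the $\G_p$-action.
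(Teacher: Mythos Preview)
Your proposal is correct and follows essentially the same route as the paper: both compare the two spaces via their natural fibrations over $\uconf_n X$, reduce (by the five-lemma on the long exact sequences) to showing the induced map on fibers $\CC^* \to \pi^{-1}(\{a_i\})/\G_p$ is a homotopy equivalence, and then use Lemma~\ref{contG}(2) (contractibility of $\pi^{-1}(\{a_i\})/\G$) to finish. The paper phrases the last step as ``$S/\CC^*$ is contractible'' rather than your trivialization $\pi^{-1}/\G_p \simeq \CC^*\times(\text{contractible})$, but these are the same observation; your caution about the quasi-fibration property is reasonable but the paper simply asserts the fiber-bundle structure.
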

\begin{proof}
Let $$S_{\{x_1,\dots, x_n\}} = \{f \in \UU(\LL) : \textrm{ } x_i \textrm{ are regular zeroes of }f \} / \G_p.$$
We have a diagram of fiber bundles as follows.

$$\centerline{\xymatrix{
\CC^* \ar[rd] \ar[r]^f &  S_{\{x_1,\dots, x_n\}} \ar[rd] & \\
& U_{n}^{alg} \ar[d] \ar[r] & \UU(\LL)/ \G_p \ar[d]\\
&  \uconf_n X\ar@{=}[r] & \uconf_n X\\
}}$$

By considering the long exact sequences of homotopy groups associated to these fiber bundles,  it suffices to prove that the map $f: \CC^* \to S$ is a homotopy equivalence.

To prove this it suffices to prove that $S / \CC^*$ is contractible where $\CC^*$ is acting on $S$ by $z\cdot f(x) = z(f(x))$. But  $$ S / \CC^* = \{f \in \UU(\LL) | \textrm{ }x_i \textrm{ are regular zeroes of }f \} / \G.$$   This is contractible by (2)  of Proposition \ref{contG}.
\end{proof}

We will need the following lemma to obtain our results.

\begin{lem}\label{benson}
Let $X$ be an algebraic curve.  Let $n \ge 1$. Let $\bold a = \{a_1, \dots a_n\} \in \uconf_n X$. Let $\alpha \in \pi_1(\uconf_n X, \bold{a}) $. Suppose $\A_*(\alpha) \neq 0 \in  H_1(X ;\ZZ)$. Let $P_{\alpha}$ denote the point-pushing map associated to $\alpha$. Let $c_i \in H_1(X - \{a_1 \dots a_n\} ; \ZZ)$ be the puncture classes. Then there exists a class $$\gamma \in H_1(X -\{a_1 \dots a_n\} ; \ZZ)$$ such that $\gamma \cap \A_*(\alpha) =1$ and  $$P_{\alpha}(\gamma) - \gamma = \sum m_i c_i,$$ where $\sum m_i \neq 0$.
\end{lem}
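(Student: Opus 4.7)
The plan is to use the unimodularity of the intersection pairing on $H_1(X;\ZZ)$ to construct $\gamma$ explicitly, and then read off $P_\alpha(\gamma)-\gamma$ from an intersection-theoretic description of the point-pushing action. First, since $\A_*(\alpha)$ is a nonzero element of $H_1(X;\ZZ)$ and the intersection form is unimodular, I can choose $\gamma_0\in H_1(X;\ZZ)$ with $\gamma_0\cdot\A_*(\alpha)=1$. The natural surjection $H_1(X-\{a_1,\ldots,a_n\};\ZZ)\twoheadrightarrow H_1(X;\ZZ)$ lets me lift $\gamma_0$: represent $\gamma_0$ by a smooth simple closed curve in $X$ that avoids $\{a_1,\ldots,a_n\}$, and let $\gamma$ be the class of this curve in $H_1(X-\{a_1,\ldots,a_n\};\ZZ)$. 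By construction $\gamma\cap\A_*(\alpha)=1$.

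Next I would realize $\alpha$ by a smooth ambient isotopy $\phi_t\colon X\to X$ with $\phi_0=\mathrm{id}$, so that the orbit of $a_i$ traces out a path $\alpha_i$ from $a_i$ to $a_{\sigma(i)}$ in $X$, where $\sigma$ is the permutation determined by $\alpha$. By definition of the Abel--Jacobi map, $\sum_i \alpha_i$ is a $1$-cycle representing $\A_*(\alpha)$. After a small perturbation the strands meet $\gamma$ transversally; as $\alpha_i$ crosses $\gamma$, the isotoped copy of $\gamma$ must detour around the moving puncture, picking up $\pm c_i$ with sign determined by the local orientation of the crossing. Summing these contributions gives the expected point-pushing formula
\[
P_\alpha(\gamma)-\gamma \;=\; \sum_{i=1}^n (\alpha_i\cdot\gamma)\,c_i,
\]
so taking $m_i:=(\alpha_i\cdot\gamma)\in\ZZ$,
\[
\sum_{i=1}^n m_i \;=\; \Big(\sum_{i=1}^n \alpha_i\Big)\cdot\gamma \;=\; \A_*(\alpha)\cdot\gamma \;=\; \pm 1 \;\neq\; 0.
\]

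The main obstacle is justifying the displayed point-pushing formula uniformly in $\alpha$. For pure braids it is standard: the action of a single point-push on $H_1$ is $\gamma\mapsto\gamma+(\alpha_i\cdot\gamma)c_i$ (as in Farb--Margalit), and higher-order braiding among $n$ simultaneous point-pushes contributes only elements of the commutator subgroup of $\mathrm{Mod}(X-\{a_1,\ldots,a_n\})$, which act trivially on $H_1$. For non-pure $\alpha$ the strands are paths rather than loops, but the local analysis at each transverse crossing of a strand with $\gamma$ is unchanged; the puncture label along $\alpha_i$ is the starting index $i$, so each crossing still contributes $\pm c_i$ and the formula persists. One small bookkeeping remark: since $\sum_i c_i=0$ in $H_1(X-\{a_1,\ldots,a_n\};\ZZ)$ the coefficients $m_i$ are only canonical up to a common additive shift, but with the explicit choice $m_i=(\alpha_i\cdot\gamma)$ the sum $\sum m_i=\pm 1$ is an honest nonzero integer, as required.
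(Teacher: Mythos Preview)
The paper does not actually prove this lemma: it simply attributes it to a computation of Bena Tshishiku and cites the in-preparation manuscript \cite{FKW}.  Your argument is therefore already more detailed than what appears in the paper, and the strategy you outline --- choose a class $\gamma$ dual to $\A_*(\alpha)$, then read off $P_\alpha(\gamma)-\gamma$ from the algebraic intersections of the individual strands $\alpha_i$ with $\gamma$ --- is the standard and correct one (the single-puncture case being the Farb--Margalit formula you invoke, and your remarks about commutators and about the non-pure case correctly explain why only the homology classes of the strands in $X$ matter for $\sum m_i$).

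There is one genuine slip.  Unimodularity of the intersection form on $H_1(X;\ZZ)$ does \emph{not} guarantee a $\gamma_0$ with $\gamma_0\cdot\A_*(\alpha)=1$ for an arbitrary nonzero class $\A_*(\alpha)$; that is only possible when $\A_*(\alpha)$ is primitive.  If $\A_*(\alpha)=d\beta$ with $d>1$ and $\beta$ primitive, then every intersection number with $\A_*(\alpha)$ is divisible by $d$.  This is really a defect of the lemma's statement rather than of your approach: the clause ``$\gamma\cap\A_*(\alpha)=1$'' cannot hold in general under the hypothesis $\A_*(\alpha)\neq 0$.  The fix is harmless.  Replace that clause by ``$\gamma\cap\A_*(\alpha)\neq 0$'', which you can always arrange (pair against any symplectic basis vector on which $\A_*(\alpha)$ has a nonzero coordinate), and then your computation gives
\[
\sum_i m_i \;=\; \A_*(\alpha)\cdot\gamma \;\neq\; 0,
\]
which is exactly what is used in the proof of Theorem~\ref{nullh}.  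With that adjustment your proof is complete.
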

This can be deduced from a computation by Bena Tshishiku. For a refererence see \cite{FKW}.

\begin{thm}\label{nullh}
The natural map $\rho: \UU(\LL) \to \textrm{Pic}_n X$ is nullhomotopic.
\end{thm}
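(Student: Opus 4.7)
The plan is to show $\rho_*\colon\pi_1(\UU(\LL))\to\pi_1(\mathrm{Pic}_n(X))\cong H_1(X;\ZZ)$ is the zero map; this suffices because $\mathrm{Pic}_n(X)$ is a torus, hence an abelian $K(\pi,1)$, and a map into such a space is nullhomotopic iff it is trivial on $\pi_1$. Since $\rho$ factors as $\UU(\LL)\xrightarrow{Z}\uconf_n(X)\xrightarrow{AJ}\mathrm{Pic}_n(X)$, where $Z$ sends a section to its zero set and $AJ$ is the Abel--Jacobi map, it is enough to show that for any loop $\gamma\colon S^1\to\UU(\LL)$ the Abel--Jacobi image $\A_*[\bar\gamma]$ of $\bar\gamma:=Z\circ\gamma$ vanishes in $H_1(X;\ZZ)$.

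Given $\gamma$ represented by a smooth family $\{s_t\}_{t\in S^1}$ with $s_0=s_1$, I would assemble the family into a single $C^\infty$ section $\tilde s(t,x):=s_t(x)$ of the pulled-back bundle $pr_X^*\LL$ over $S^1\times X$. After an arbitrarily small perturbation---which does not affect the homotopy class $[\gamma]\in\pi_1(\UU(\LL))$, since $\UU(\LL)$ is open in $C^\infty(X,\LL)$---the section $\tilde s$ may be taken transverse to the zero section, so its zero locus $\tilde Z:=\tilde s^{-1}(0)$ is a smooth closed 1-submanifold of $S^1\times X$ whose class satisfies $[\tilde Z]=\mathrm{PD}\bigl(c_1(pr_X^*\LL)\bigr)=\mathrm{PD}(pr_X^*c_1(\LL))$ in $H_1(S^1\times X;\ZZ)$.

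The crucial step is a K\"unneth--Poincar\'e duality calculation. Under the decomposition
$$H^2(S^1\times X)=H^0(S^1)\otimes H^2(X)\;\oplus\;H^1(S^1)\otimes H^1(X),$$
the class $pr_X^*c_1(\LL)$ lies entirely in the first summand. Poincar\'e duality on the closed oriented 3-manifold $S^1\times X$ sends the first summand onto $H_1(S^1)\otimes H_0(X)\cong\ZZ$ and the second onto $H_0(S^1)\otimes H_1(X)\cong H_1(X)\subset H_1(S^1\times X)$. Consequently $[\tilde Z]$ has no $H_1(X)$ component, and therefore $(pr_X)_*[\tilde Z]=0$ in $H_1(X;\ZZ)$. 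To finish, I would identify $(pr_X)_*[\tilde Z]$ with $\A_*[\bar\gamma]$: the strands of $\tilde Z$ project in $X$ to the traces of the moving zeros $a_i(t)$ of $\{s_t\}$, and after joining strands according to the permutation monodromy around the loop, their total homology class is exactly the Abel--Jacobi image of $\bar\gamma$.

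The main technical obstacle is the identification of $(pr_X)_*[\tilde Z]$ with $\A_*[\bar\gamma]$, which requires bookkeeping of how the strands of $\tilde Z$ are permuted around the loop and how this assembles into a cycle computing the Abel--Jacobi class. Lemma~\ref{benson} provides an equivalent ``detection'' formulation: were $\A_*[\bar\gamma]$ nonzero, one obtains a cycle $\gamma_0\in H_1(X\setminus Z(s_0);\ZZ)$ whose point-pushing discrepancy is $\sum m_ic_i$ with $\sum m_i\neq 0$, and pairing $\gamma_0$ against the non-vanishing section $\tilde s$ on the complement of $\tilde Z$ (using the index-$1$ regularity of each zero) forces $\sum m_i=0$, contradicting the lemma.
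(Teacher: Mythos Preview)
Your argument is correct and takes a genuinely different route from the paper's. The paper proceeds by contradiction via Lemma~\ref{benson}: assuming some $\alpha\in\pi_1(\uconf_n X)$ with $\A_*(\alpha)\neq 0$ lifts to $\UU(\LL)$, the lemma supplies a class $\gamma\in H_1(X\setminus\{a_i\})$ with $P_\alpha(\gamma)-\gamma=\sum m_i c_i$ and $\sum m_i\neq 0$, and one then compares the winding integrals $\int_\gamma f/|f|$ before and after point-pushing to force $\sum m_i=0$. Your main argument bypasses this entirely: you compute the Abel--Jacobi image of a loop directly as $(pr_X)_*[\tilde Z]$ and kill it using $[\tilde Z]=\mathrm{PD}(pr_X^*c_1(\LL))$ together with the K\"unneth splitting of $H^2(S^1\times X)$. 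This is more self-contained and conceptual---no appeal to Lemma~\ref{benson} is needed, and the identification of $(pr_X)_*[\tilde Z]$ with $\A_*[\bar\gamma]$ is just the standard description of $Br_n(X)\to H_1(X)$ as ``sum the strand traces.'' (Incidentally, no perturbation is required: since each $s_t$ already has regular zeros, $\tilde s$ is automatically transverse to the zero section, as surjectivity of $d\tilde s$ at a zero is witnessed already on the $T_xX$ factor.) The paper's approach, by contrast, makes the role of the index-$1$ hypothesis explicit through the equalities $\int_{c_i} f/|f|=1$; in yours this is packaged into the transversality that makes $[\tilde Z]$ represent the Euler class. Your final paragraph is essentially a sketch of the paper's own proof.
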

\begin{proof}
As $\textrm{Pic}_n X$ is a $K(\pi,1)$ it suffices to prove that $$\rho_*: \pi_1(\UU(\LL)) \to  \pi_1(\textrm{Pic}_n X) \cong H_1(X; \ZZ)$$ is trivial. Let $\pi: \UU(\LL) \to \uconf_n X$ be defined by $ \pi(s) = \{a \in X | s(a) = 0\}.$ Let $\A: \uconf_n X \to \textrm{Pic}_n X$ be the Abel-Jacobi map. Let $\bold a = \{a_1, \dots a_n\} \in \uconf_n X$. Let $\alpha \in \pi_1(\uconf_n X, \bold{a}) $. Suppose $\A^* (\alpha) \neq 0 \in H_1(X ;\ZZ)$. It suffices to show that $\alpha \not \in \rho_*(\pi_1(\UU(\LL))).$ This is because the map $\rho$ factors through $\A$.

Let $Mod(X - \{a_1, \dots, a_n\})$ be the mapping class group of the punctured surface $X - \{a_1, \dots, a_n\}$. Associated to $\alpha$ there exists a point pushing map $P_\alpha \in Mod(X - \{a_1, \dots a_n\})$. 
Let $c_i \in H_1(X - \{a_1 \dots a_n\} ; \ZZ)$ be the puncture classes.

 Then by Lemma \ref{benson}there exists a class $[\gamma] \in H_1(X - \{a_1 \dots a_n\})$ such that $$P_{\alpha_*}([\gamma]) -[\gamma] = \sum m_ic_i,$$ where $m_i \in \ZZ$ satisfying $\sum m_1 \neq 0$.

 Let $f \in \UU(\LL)$ be such that $\pi (f) = \bold{a}$.
 Suppose for the sake of contradiction that $\alpha \in \mathrm{im}(\pi_1(\UU(\LL)),f)$ with $\alpha \neq 0$. Then there exists a loop in $\UU(\LL)$, which we will call $F_{\alpha}$  such that $\pi(F_\alpha) = \alpha$, i.e. $F_{\alpha}$ is a lift of $\alpha$.
 
 Now for $s \in (0,1)$, let $P^s_{\alpha}$ be the point-pushing homeomorphism  along the path $\alpha|_{[0,s]}$. It is a well-defined element of $$\pi_0(\textrm{Homeo}((X, \alpha(0)),(X, \alpha(s)))).$$ 
 Now $P^t_{\alpha}(f)$ is a lift of $\alpha$ as a path (not a loop) to $\UU(\LL)$. Since the map $\pi$ is a fibration, any two paths that are lifts of $\alpha$ must have endpoints in the same component of $\pi^{-1}(\bar{a})$. This would imply that $f$ (the endpoint of $F_{\alpha}$) and $P_{\alpha_*}(f)$ (the endpoint of $P^t_{\alpha}(f)$) would be in the same path component of $\pi^{-1}(\bold{a})$. If that were so, then we would have 
 $$\int_{\gamma}\frac{ P_{\alpha}(f)}{|P_{\alpha}(f)|}- \int_\gamma \frac{f}{|f|}=0.$$
 However we will now show that this is not the case.
 
  We'd like  to remind the reader that $\int_{c_i} f/ |f| =1$. This is because the section $f$ has a zero of index $1$ at each of the $a_i$s. Then we know that
$$\int_{\gamma}\frac{ P_{\alpha}(f)}{|P_{\alpha}(f)|}- \int_\gamma \frac{f}{|f|}= \int_{P_{\alpha_*}\gamma}  \frac{f}{|f|} -\int_\gamma \frac{f}{|f|}$$

$$
 =\sum_i m_i \int_{c_i}f/ |f| = \sum m_i \neq 0.$$
 This proves that any $\alpha$ that lifts is forced to be trivial, which completes the proof.
\end{proof}

\begin{prop}\label{classmap}
let $n \ge 1$, $p \in X$. There exists a homotopy equivalence $f : \textrm{Pic}_n X \to B \G_p$ that makes the following diagram commute upto homotopy:

\centerline{ \xymatrix{
     & \UU(\LL) \ar[d]^\pi\\
U_n^{alg} \ar[d]^{\A} \ar[r]^i & \UU(\LL)/ \G_p \ar[d]^ g \\
\textrm{Pic}_n X \ar[r]^f & B\G_p\\
}}
Here the map $g$ is the classifying map for the fibration $\UU(\LL) \to \UU(\LL) / \G_p$.
\end{prop}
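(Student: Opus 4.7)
The plan is to build $f$ as the classifying map of the natural principal $\G_p$-bundle $\HH_n \to Pic_n X$ from Section 3, and then show that the principal $\G_p$-bundle $\Ua_n \to U_n^{alg}$ is simultaneously the pullback of two different $\G_p$-bundles, forcing the two classifying maps of it to agree up to homotopy.

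First I would observe that $\HH_n$ is an affine space (a torsor over $\Omega^{0,1}(X)$ via the difference of Dolbeault operators) and hence contractible. Since $\G_p$ acts freely on $\HH_n$ with quotient $Pic_n X$, the principal bundle $\HH_n \to Pic_n X$ has contractible total space and is therefore a model for the universal principal $\G_p$-bundle $E\G_p \to B\G_p$. I take $f: Pic_n X \to B\G_p$ to be its classifying map; because the total space is contractible, such an $f$ is automatically a homotopy equivalence.

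Next I would verify that the bundle $\Ua_n \to U_n^{alg}$ is the pullback of two natural bundles in two different ways. The first projection $(s, h) \mapsto s$ from $\Ua_n \to \UU(\LL)$ is $\G_p$-equivariant and descends to $i$ on quotients, exhibiting $\Ua_n \to U_n^{alg}$ as the pullback of the principal $\G_p$-bundle $\UU(\LL) \to \UU(\LL)/\G_p$ along $i$. The second projection $(s, h) \mapsto h$ from $\Ua_n \to \HH_n$ is $\G_p$-equivariant and descends to $\A$, exhibiting the same bundle $\Ua_n \to U_n^{alg}$ as the pullback of $\HH_n \to Pic_n X$ along $\A$.

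Since $g$ and $f$ are by construction the classifying maps of $\UU(\LL) \to \UU(\LL)/\G_p$ and of $\HH_n \to Pic_n X$ respectively, both compositions $g \circ i$ and $f \circ \A$ classify the single principal $\G_p$-bundle $\Ua_n \to U_n^{alg}$, and hence are homotopic. This gives the desired commutativity of the bottom square. The chief technical obstacle is to check that the free $\G_p$-actions in question really do produce locally trivial principal $\G_p$-bundles so that the classifying-space machinery applies on the nose; this should follow from standard slice/local-section arguments since $\G_p$ is a well-behaved topological group (homotopy equivalent to the discrete group $H^1(X;\ZZ)$ by Proposition \ref{Ghtpy}) and the underlying spaces are paracompact.
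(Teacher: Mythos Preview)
Your argument is correct and is genuinely different from the paper's. The paper does \emph{not} use the contractibility of $\HH_n$; instead it argues that the natural map $\alpha:\UU(\LL)/\G_p\to Pic_n X$ is itself a classifying map for the bundle $\UU(\LL)\to\UU(\LL)/\G_p$, and to do so it invokes Theorem~\ref{nullh} (that $\UU(\LL)\to Pic_n X$ is nullhomotopic), whose proof in turn relies on the mapping-class-group computation of Lemma~\ref{benson}. Your route sidesteps all of this: once you observe that $\HH_n$ is an affine space and hence $\HH_n\to Pic_n X$ is a model for $E\G_p\to B\G_p$, the two $\G_p$-equivariant projections $\Ua_n\to\UU(\LL)$ and $\Ua_n\to\HH_n$ immediately force $g\circ i\simeq f\circ\A$ by uniqueness of classifying maps. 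In fact your argument recovers Theorem~\ref{nullh} as a byproduct: the composite $\UU(\LL)\to\UU(\LL)/\G_p\xrightarrow{g} B\G_p$ is null since $\UU(\LL)$ is the total space of the bundle $g$ classifies, and transporting along $f^{-1}$ gives nullhomotopy of $\UU(\LL)\to Pic_n X$. The only soft spot, which you already flag, is verifying that the free $\G_p$-actions on $\HH_n$, $\UU(\LL)$, and $\Ua_n$ are locally trivial principal bundles; this is standard for gauge-group actions on Dolbeault operators, and in any case one can pass to the associated $\pi_0(\G_p)\cong\ZZ^{2g}$-covers where the issue disappears.
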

 \begin{proof}
 The situation is a follows: 
 $\pi: \UU(\LL) \to \UU(\LL)/ \G_p$ is a principal $\G_p$ bundle. Since $\G_p \simeq \ZZ^2g$, we have an associated principal $\ZZ^{2g}$ bundle $E \to \UU(\LL)/ \G_p$, where $$E:= \UU(\LL) / (f_1 \sim f_2 \textrm{ if } \pi(f_1) = \pi(f_2) \textrm{  and } f_1, f_2 \textrm{are in the same path component of } \pi^{-1}(\pi(f_1))).$$
 Equivalently, if $(\G_p)_0$ is the identity component of $\G_p$, $E = \UU(\LL)/(\G_p)_0$.
 
 The quotient map $p:\UU(\LL) \to E$ is naturally a homotopy equivalence as the group $(\G_p)_0$ is contractible.
 We then have a diagram as follows:

 \centerline{
 \xymatrix{
\UU(\LL) \ar[d] \ar[r]^p & E \ar[d]\\
\UU(\LL)/ \G_p \ar[r]^= & \UU(\LL)/ \G_p \\
}}
 
 It suffices to prove that the natural map $$\alpha: \UU(\LL)/ \G_p \to \textrm{Pic}_n X$$ satisfies the classifying space property for the fibration $E \to \UU(\LL)/ \G_p$.  However by Proposition \ref{nullh} the composite map $E \to Pic_n X$ is nullhomotopic and we can lift it to $\tilde{\textrm{Pic}_n X}$, the universal cover of $\textrm{Pic}_n X$.
 So we have a commutative diagram as follows.
 
 \centerline{
 \xymatrix{
E \ar[d] \ar[r] & \tilde{\textrm{Pic}_n X} \ar[d]\\
\UU(\LL) / \G_p\ar[r]^\alpha & \textrm{Pic}_n X\\
}}
Hence $\alpha$ is a classifying map and we are done. 
\end{proof}

\begin{thm}\label{UUish}
$\UU(\LL)$ is homotopy equivalent to $H\A^{-1}(\LL)$, the homotopy fibre of $\A$. 
\end{thm}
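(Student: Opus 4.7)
The plan is to combine the classifying-map diagram from Proposition \ref{classmap} with the standard identification of homotopy fibres over a $K(\pi,1)$ with pullbacks of the universal cover. Once both pieces are in hand, the theorem is essentially a matching of two pullback squares.

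First, I would recall that the proof of Proposition \ref{classmap} already exhibits a pullback square whose left-hand column is the (up to homotopy) principal $\G_p$-bundle $E \to \UU(\LL)/\G_p$, whose right-hand column is the universal cover $\widetilde{Pic_n X} \to Pic_n X$, and whose bottom arrow is the natural map $\alpha : \UU(\LL)/\G_p \to Pic_n X$. Together with the homotopy equivalence $\UU(\LL) \simeq E$ established there, and the equivalence $\UU(\LL)/\G_p \simeq U_n^{alg}$ from the first proposition of this section (under which $\alpha$ corresponds to the Abel--Jacobi map $\A$), this identifies $\UU(\LL)$ with the pullback $U_n^{alg} \times_{Pic_n X} \widetilde{Pic_n X}$ up to homotopy.

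Second, I would use that $Pic_n X$ is a $K(\ZZ^{2g},1)$, so its universal cover $\widetilde{Pic_n X}$ is contractible and the projection $\widetilde{Pic_n X} \to Pic_n X$ is a Serre fibration with contractible total space. Pulling such a fibration back along any map $g : Z \to Pic_n X$ yields, by the long exact sequence in homotopy, a model for the homotopy fibre of $g$. Applying this to $g = \A$ gives $U_n^{alg} \times_{Pic_n X} \widetilde{Pic_n X} \simeq H\A^{-1}(\LL)$, and combining with the first step produces the desired homotopy equivalence $\UU(\LL) \simeq H\A^{-1}(\LL)$.

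The main, and really the only, obstacle is that nearly all the substantive work has already been done: Proposition \ref{classmap} (which rests on the nullhomotopy result Theorem \ref{nullh}) supplies the pullback diagram, and the identification of pullbacks of acyclic fibrations over a $K(\pi,1)$ with homotopy fibres is classical. What remains is the bookkeeping to verify that the two pullback descriptions agree under the identifications $\UU(\LL)/\G_p \simeq U_n^{alg}$ and $\alpha \simeq \A$, which is routine.
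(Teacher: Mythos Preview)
Your proposal is correct and follows essentially the same route as the paper: both arguments use Proposition~\ref{classmap} (and the equivalence $\UU(\LL)/\G_p \simeq U_n^{alg}$) to compare the two fibre sequences over $Pic_n X \simeq B\G_p$. The only cosmetic difference is that you phrase the comparison via the explicit pullback $U_n^{alg}\times_{Pic_n X}\widetilde{Pic_n X}$, whereas the paper instead lifts $H\A^{-1}(\LL)$ through the fibre sequence $\UU(\LL)\to\UU(\LL)/\G_p\to B\G_p$ using nullhomotopy of the composite and then invokes that $i$ and $f$ are equivalences; these are equivalent packagings of the same idea.
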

\begin{proof}
By Propositions \ref{nullh} and \ref{classmap} there is a diagram as follows:

\centerline{
\xymatrix{
H\A^{-1}(\LL) \ar[d] & \UU(\LL) \ar[d]\\
U_n^{alg} \ar[d]^{\A} \ar[r]^i & \UU(\LL)/ \G_p \ar[d]\\
\textrm{Pic}_n X \ar[r]^f & B\G_p\\
}}
Since the composite map $H \A^{-1}(\LL) \to B\G_p$ is null homotopic, by the properties of a fibre sequence we have a map $g:H\A^{_1}(\LL) \to \UU(\LL)$ that commutes with the maps of the diagram. Since the maps $i$ and $f$ are homotopy equivalences, so is $g$.
\end{proof}

Now we can finally prove the theorems in the introduction of this paper.
\begin{proof}[Proof of Theorem \ref{main2}]
By Theorem \ref{UUish} $\UU(\LL) \simeq H \A^{-1}(\LL).$ So it suffices to prove that $H \A^{-1}(\LL)$ is a $K(\pi,1)$ for $K_n$. However this follows from Theorem \ref{eqh}.
\end{proof}

\begin{proof}[Proof of Theorem \ref{main}]
By Theorem \ref{nHFT} and Theorem \ref{key} the map $f :U(\LL) \to H \A^{-1}(\LL)$ induces an isomorphism $H^*(U (\LL)) \cong H^*(H \A^{-1}(\LL))$ for $*<n -(2g))$. But by  Theorem \ref{UUish} $H \A^{-1} (\LL) \simeq  \UU(\LL)$ and it is easy to see that $$i^*: H^*(\UU(\LL); \ZZ) \to H^*(U(\LL); \ZZ)$$ is the composition $$
H^*(\UU(\LL); \ZZ) \cong H^* ( H\A^{-1}(\LL);\ZZ) \rightarrow^{f^*} H^*(U(\LL); \ZZ).   
$$
\end{proof}

\end{document}